\numberwithin{equation}{section}
\newcommand{\be}{\begin{eqnarray}}
\newcommand{\ee}{\end{eqnarray}}
\newcommand{\ce}{\begin{eqnarray*}}
\newcommand{\de}{\end{eqnarray*}}
\newtheorem{theorem}{Theorem}[section]
\newtheorem{lemma}[theorem]{Lemma}
\newtheorem{remark}[theorem]{Remark}
\newtheorem{definition}[theorem]{Definition}
\newtheorem{proposition}[theorem]{Proposition}
\newtheorem{Examples}[theorem]{Example}
\newtheorem{corollary}[theorem]{Corollary}
\newtheorem{assumption}{Assumption}
\def\bt{\begin{theorem}}
\def\et{\end{theorem}}
\def\bl{\begin{lemma}}
\def\el{\end{lemma}}
\def\br{\begin{remark}}
\def\er{\end{remark}}
\def\bx{\begin{Examples}}
\def\ex{\end{Examples}}
\def\bd{\begin{definition}}
\def\ed{\end{definition}}
\def\bp{\begin{proposition}}
\def\ep{\end{proposition}}
\def\bc{\begin{corollary}}
\def\ec{\end{corollary}}
\def\ba{\begin{assumption}}
\def\ea{\end{assumption}}
\def\bpf{\begin{proof}}
\def\epf{\end{proof}}
\def\cB{{\mathcal B}}
\def\cC{{\mathcal C}}
\def\cF{{\mathcal F}}
\def\cH{{\mathcal H}}
\def\cS{{\mathcal S}}
\def\cT{{\mathcal T}}
\def\mC{{\mathbb C}}
\def\mD{{\mathbb D}}
\def\mE{{\mathbb E}}
\def\mI{{\mathbb I}}
\def\mP{{\mathbb P}}
\def\mR{{\mathbb R}}
\def\mS{{\mathbb S}}
\def\R{\mathbb R}
\def\N{\mathbb N}
\def\bP{{\mathbf P}}
\def\bP{{\mathbf P}}
\def\bE{{\mathbf E}}
\def\1{{\mathbf{1}}}
\def\sA{{\mathscr A}}
\def\sB{{\mathscr B}}
\def\sC{{\mathscr C}}
\def\sF{{\mathscr F}}
\def\sK{{\mathscr K}}
\def\sL{{\mathscr L}}
\def\sM{{\mathscr M}}
\def\sP{{\mathscr P}}
\def\sS{{\mathscr S}}
\def\eps{\varepsilon}
\def\d{\text{\rm{d}}}
\def\e{\mathrm{e}}
\def\a{\alpha}
\def\om{\omega}
\def\Om{\Omega}
\def\p{\partial}
\def\si{\sigma}
\def\({{\Big(}}
\def\){{\Big)}}
\def\[{{\Big[}}
\def\]{{\Big]}}
\def\<{{\langle}}
\def\>{{\rangle}}
\def\bx{{\mathbf{x}}}
\def\sgn{\textit{\rm sgn}}
\def\dif{{\mathord{{\rm d}}}}
\def\={&\!\!=\!\!&}
\def\geq{\geqslant}
\def\leq{\leqslant}
\def\iint{\int_0^t\!\!\!\int}
\begin{document}
\title{Nonlocal elliptic equation in H\"older space and the martingale problem}
\author{Chengcheng Ling and Guohuan Zhao}

\address{Chengcheng Ling:
School of science, Beijing Jiaotong University, Beijing 100044, China\\
Faculty of Mathematics, Bielefeld University,
33615, Bielefeld, Germany\\
Email: cling@math.uni-bielefeld.de
 }

\address{Guohuan Zhao:
Faculty of Mathematics, Bielefeld University,
33615, Bielefeld, Germany\\
Email: zhaoguohuan@gmail.com
}

\thanks{
Research of C. Ling is  supported  by IRTG 2235 Bielefeld-Seoul ¡°Searching for the regular in the
irregular: Analysis of singular and random systems¡±.\\
\indent Research of G. Zhao is supported by the German Research Foundation (DFG) through the Collaborative Research Centre(CRC) 1283 ``Taming uncertainty and profiting from randomness and low regularity in analysis, stochastics and their applications".
}

\begin{abstract}
The well-posedness of  nonlocal elliptic equation with singular drift is investigated in Besov-H\"older spaces. As an application, we show the existence and uniqueness for corresponding martingale problem. Moreover, we prove that the one dimensional distribution of the martingale solution has a density in some Besov space.

\bigskip
\noindent
\textbf{Keywords}:
Nonlocal equation,
L\'evy process, martingale problem, Besov space,  Fokker Planck equation\\

\noindent
  {\bf AMS 2010 Mathematics Subject Classification:}  Primary: 60H10, 35R09; Secondary: 60G51
\end{abstract}

\maketitle
\section{Introduction and Main Result}
We consider the following nonlocal elliptic equation in $\R^d$:
\be\label{Eq-PDE}
\lambda u-\sL^\a_\kappa u-b\cdot \nabla u=f.
\ee
Here $\a\in (0,2)$,  $b\in \sC^\beta$(Besov-H\"older space, see Definition \ref{Def-Besov} below) with $\beta\in \R$,  $\kappa$ is a nonnegative measurable function from $\R^d\times\R^d$ to $[0,\infty)$ and
$$
\sL_{\kappa}^\a f(x):=\int_{\R^d} \left(f(x+z)-f(x)-\nabla f(x)\cdot z^{(\a)}\right) \frac{\kappa(x, z)}{|z|^{d+\alpha}}\d z,
$$
where $z^{(\a)}:=z\1_{\{|z|<1\}}\1_{\a=1}+z\1_{\a\in(1,2)}$.

The first aim of our work is to establish a Schauder's type estimate for the solution to  \eqref{Eq-PDE} with irregular coefficients. There are many literatures studied this problem in different settings. When $\a\in (1,2)$, $b$ is a H\"older continuous function and $\sL_\kappa^\a$ is some $\a$-stable type  operator,  Priola in \cite{Pr1} and \cite{Pr2} studied the a priori estimate by using classic perturbation argument. Similarly, Athreya, Butkovsky and Mytnik in \cite{At-Bu-My} showed the global estimate for $\sL^\a_\kappa=\Delta^{\a/2}$ with $\a\in (1,2)$ and $b\in \sC^\beta$ with $\beta>\frac{1-\alpha}{2}$. Indeed, the analytic result in \cite{At-Bu-My} also holds for any non degenerate $\a$-stable operators. For $\a>1$, in \cite{Mi-Pr},  Mikulevicius and Pragarauskas also studied  the nonlocal Cauchy problem with first order term in H\"older space. And recently, in \cite{Do-Ji-Zh}, Dong, Jin and Zhang studied the Dini and Schauder estimate for nonlocal fully nonlinear equations. However, when $\a<1$, both \cite{Mi-Pr} and \cite{Do-Ji-Zh} must assume  $b\equiv0$.   To our best knowledge, when $\a\in (0,1)$, the interior estimate for the solution to \eqref{Eq-PDE} with non divergence free drift was first  obtained by Silvestre in \cite{Si2}. He used the extension method for $\sL^\a_{\kappa}=\Delta^{\a/2}$ when $\a\in (0,1)$ and $b\in \sC^\beta$ with $\beta>1-\a$ to reduce the nonlocal problem to the local case.   Recently, similar result was extended for stable-like operators in \cite{Zh-Zh3} by using Littewoord-Paley theory. Let us also mention that there are much more works for nonlocal equation without first order term, for instance  \cite{Ba-Ka}, \cite{Do-Ki} and the references therein.  

\medskip
In this work, we will show the global estimates  in more general setting.  Our assumption on $\kappa$ is:
\ba\label{Aspt1}
There are constants $r_0, \Lambda_1, \Lambda_2, \Lambda_3>0$, $\vartheta\in (0,1)$ such that
\begin{align}\label{AS1}
\int_{B_r} \kappa(x, z)\,\d z\geq \Lambda_1r^d, \quad x\in \R^d, r\in (0,r_0]
\tag{\bf H$_1$};
\end{align}
\begin{align}\label{AS2}
\kappa(x, z)\leq \Lambda_2 , \ x,z\in \R^d; \ \ \1_{\alpha=1}\int_{\{r<|z|<R\}} z\cdot\kappa(x,z)\dif z=0,\ 0<r<R<\infty
\tag{\bf H$_2$};
\end{align}
\begin{align}\label{AS3}
|\kappa(x,z)-\kappa(y,z)|\leq \Lambda_3 |x-y|^{\vartheta}, \quad  x,y\in\R^d, \vartheta\in (0,1)
\tag{\bf H$_3$}.
\end{align}
\ea

\medskip

The following is our first main result:
\bt\label{Th-Holder-Var}
Suppose $\kappa(x,z)$ satisfies \eqref{AS1}-\eqref{AS3} and $\max\{{0, (1-\a)}\}<\vartheta<1$.
\begin{enumerate}
\item If $\a\in(0,1]$, $\beta\in (1-\a, \vartheta)$ and $b\in \sC^\beta$, then there are constants $\lambda_0, C>0$ such that for any $\lambda\geq \lambda_0$ and $f\in \sC^\beta$, equation \eqref{Eq-PDE} has a unique solution in $\sC^{\a+\beta}$ satisfying
\be\label{Eq-Holder-Var}
(\lambda-\lambda_0)\|u\|_{\sC^\beta}+\|u\|_{\sC^{\a+\beta}}\leq C \|f\|_{\sC^\beta},
\ee
where $\lambda_0, C$ only depend on $d, \a, \beta, \vartheta, r_0, \Lambda_1, \Lambda_2, \Lambda_3, \|b\|_{\sC^\beta}$.

\item If $\a\in(1,2)$, $\beta\in (-(\frac{\a-1}{2}\wedge \vartheta), \vartheta)$ and $b\in \sC^\beta$, then the above  conclusions also hold.
\end{enumerate}
\et
Notice that our condition \eqref{AS1} is much weaker than the usual lower bounded assumption $\kappa(x,z)\geq \lambda>0$ and also weaker than Assumption A(i) in \cite{Mi-Pr}. This makes our proof  A typical example is take
$$
\kappa(x,z)=\1_{V(x)}(z).
$$
Here $V(x)\in \R^d$ is a conical set of the form $V(x)=\{z\in \R^d: |\<z/|z|,\xi(x)\>|> \delta\}$ with measurable $\xi: \R^d \to \mS^{d-1}$, and $\delta>0$ is fixed.\\

Like in \cite{Zh-Zh3}, our approach of getting the Schauder type estimate is based on Littlewood-Paley theory. For the first case in Theorem \ref{Th-Holder-Var}, the key step is to  establish a frequency localized maximum inequality(see Lemma \eqref{Le-Max1} below). This kind of maximum principle  appeared in \cite{Wa-Zh} for $\kappa\equiv1$. We extend their result for any $\kappa(x,z)=\kappa(z)$ satisfying \eqref{AS4} below.   When $\a>1$ and $\beta\in (-(\frac{\a-1}{2}\wedge \vartheta), 0]$, the main problem is how to prove the boundedness of $\sL_\kappa^\a: \sC^{\a+\beta}\to \sC^{\beta}$, where the Bony's decomposition  plays a crucial rule in our proof.

\medskip

As one of the motivations of considering the regularity estimate for \eqref{Eq-PDE}, we want to investigate the well-posedness of the following SDE in $\R^d$:
\be\label{Eq-SDE}
X_t=X_0+\int_0^t \sigma(X_{s-}) \d Z_s+\int_0^t  b(X_s)\d s
\ee
in weak sense. Here $Z_t$ is an $\a$-stable process in $\R^d$, $\si$ is a $d\times d$-matrix-valued measurable function and $b$ is the drift, which might be very singular. Suppose $Z_t$ is rotational symmetric, $L_\sigma^\a+b\cdot\nabla $ is the generator of $X_t$, for any $\sigma$ satisfies \eqref{AS-sigma1} below, we have
\begin{align*}
L^\a_\sigma f(x)+b\cdot \nabla f(x)
=&\int_{\R^d} (f(x+\sigma(x)z)-f(x)-\nabla f(x)\cdot \sigma(x)z^{(\a)})\frac{\d z}{|z|^{d+\a}}+b\cdot\nabla f(x)\\
=&\int_{\R^d} (f(x+z)+f(x)-\nabla f(x)\cdot z^{(\a)}) \frac{\d z}{|\det \sigma(x)|\cdot|\sigma^{-1}(x)z|^{d+\a}}\\
&+b\cdot \nabla f(x) =\sL_{\kappa,b}^\a f(x),
\end{align*}
where
\be\label{Eq-Ka-Ro}
\kappa(x,z):=\frac{|z|^{d+\a}}{|\det \sigma(x)|\cdot|\sigma^{-1}(x)z|^{d+\a}}.
\ee
Since the  well-posedness of  the resolvent equations or backward Kolmogorov equations associated with $L_\sigma^\a+b\cdot \nabla $ are closely related to the  weak solutions(or martingale solutions) of \eqref{Eq-SDE}, our analytic result Theorem \ref{Th-Holder-Var} has direct applications to SDE driven by $\a$-stable process.

\medskip

On the other hand, pathwise uniqueness and strong existence for \eqref{Eq-SDE} with irregular coefficients have already been studied in a large number of literatures, see \cite{Ta-Ts-Wa} for one dimensional case and  \cite{Pr1}, \cite{Zh1}, \cite{Pr2}, \cite{Ch-So-Zh}, \cite{Ch-Zh-Zh}, etc for more general L\'evy noises in $\R^d$.  Roughly speaking, these works showed that the SDE \eqref{Eq-SDE} has a unique strong solution under the conditions that $\si$ is bounded, uniformly nondegenerate and Lipschitz, $Z_t$ is a non degenerated $\a$-stable process, $b\in \sC^\beta$ with $\beta>1-\frac{\a}{2}$. However, when we consider the existence and uniqueness  of weak solutions to \eqref{Eq-SDE} or the well-posedness of corresponding martingale problem, the regularity assumptions on the coefficients can be  released. In \cite{Zh-Zh1}, the authors considered \eqref{Eq-SDE} driven by Brownian motion, they showed that if $\sigma=\mI$, $b\in H^{-\frac{1}{2}}_p$ with $p>2d$ one can still give a natural meaning of $``\int_0^t b(X_s) \d s"$(see also \cite{Zh-Zh2}). The drift term may not be a process with finite variation any more but an additive functional of $X$ with zero energy. In \cite{At-Bu-My}, they considered the similar SDEs driven by one dimensional additive $\alpha$-stable noise with singular drifts in Besov-H\"older space. The above works are motivated by Bass and Chen's  early works \cite{Ba-Ch1}, \cite{Ba-Ch2}.

\medskip

In this paper, we will study the martingale problem associated with $\sL^\a_{\kappa, b}:=\sL_{\kappa}^\a+b\cdot\nabla$. When $\a\leq 1$, since we assume $b\in \sC^\beta$ with $\beta>0$, there is no issue about the definition of martingale or weak solution. However, when $\a>1$ and $b\in \sC^\beta$ with $\beta\leq 0$, like in \cite{Zh-Zh1}, \cite{At-Bu-My}, we need to give an appropriate  definition of solutions to \eqref{Eq-SDE}(see Definition \ref{Def-MP}). Combining Theorem \ref{Th-Holder-Var} and some standard techniques in probability theory, we can obtain the following result. We distribute the proof in Lemma \ref{Le-WP1} and Lemma \ref{Le-WP2}.
\bc\label{Cor-MP}
Suppose $\max\{{0, (1-\a)}\}<\vartheta<1$, $\kappa(x,z)$ satisfies \eqref{AS1}-\eqref{AS3}, and $b\in \sC^\beta$, where
$\beta\in (1-\a, \vartheta)$ if $\a\in(0,1]$ and $\beta\in (-(\frac{\a-1}{2}\wedge \vartheta), 0]$ if $\a\in(1,2)$. Then, for each $x\in \R^d$, there is a unique probability measure $\mP_x$ with starting point $x$ on the Skorokhod space $\mD$, which solves the martingale problem associated with $\sL^\a_{\kappa, b}$ and satisfies the Krylov's type estimate(see Definition \ref{Def-Kry}).
\ec

Our corollary above implies:
\bp
Suppose $Z_t$ is a rotational symmetric $\a$-stable process, $\sigma$ satisfies
\be\label{AS-sigma1}
\Lambda^{-1} |z|\leq |\sigma(x) z|\leq \Lambda |z|,\quad \Lambda>0,  z\in \R^d.
\ee
then
\begin{enumerate}[(i)]
\item If  $\a\in (0,1]$, $\beta\in (1-\a, 1)$. $\sigma, b\in \sC^\beta$, there is a unique weak solution to \eqref{Eq-SDE}.  
\item If $\a\in (1,2)$, $\beta\in (\frac{1-\a}{2}, 0]$, $\eps>0$. $\sigma\in \sC^{-\beta+\eps}$, $b\in \sC^{\beta}$,  there is a unique weak solution to \eqref{Eq-SDE}. 
\end{enumerate}
\ep

\medskip

Another interesting problem we attempt to study in this paper is the regularity estimates for the one dimensional distribution of the solutions to martingale problem associated with $\sL^\a_{\kappa,b}$. Debussche and Fournier in \cite{De-Fo} proved that the law of the solution to \eqref{Eq-SDE} has a density in some Besov space, under some non-degeneracy condition on the driving L\'evy process and some H\"older-continuity assumptions on the coefficients. 
We following the thoughts in \cite{De-Fo}, but instead of using the crucial  Lemma 2.1 therein, we use the Littlewood-Paley description of Besov spaces to simplify the proof and get a bit more general result(see Lemma \ref{Le-Density-Y}).

\bt\label{Th-Density}
Under the same conditions in Corollary \ref{Cor-MP} for each $x\in \R^d$, suppose  $\mP_x$  is the unique solution in Corollary \ref{Cor-MP}. Then, for each $t>0$ the distribution of canonical process $\om_t$ under $\mP_x$ has a density in Besov space $B^\gamma_{q,\infty}$ with $\gamma$ and $q$ satisfying
 $$
0<\gamma < \a(\a +\beta -1), \quad 1\leq q<\frac{d}{d+\gamma-\a(\a+\beta-1)}
$$
  if $\alpha\in(0,1]$
  and
  $$
0<\gamma < (\a +\beta -1)\wedge \tfrac{\vartheta}{\a}, \quad 1\leq q<\frac{d}{d+\gamma-(\a+\beta-1)\wedge \frac{\vartheta}{\a}}
$$
  if $\alpha\in(1,2)$.
\et

Let $\sP(\R^d)$ be the collection of all probability measures on $\R^d$. Combining corollary \ref{Cor-MP} and Theorem \ref{Th-Density}, we obtain the following interesting corollary:

\bc\label{Cor-FPE}
Under the same conditions in Corollary \ref{Cor-MP} for any $x\in \R^d$, the following nonlocal Fokker-Planck equation:
\be\label{Eq-FPE}
\<\varrho_t, \phi\> = \phi(x)+\int_0^t \< \varrho_s, \sL_{\kappa, b}^\a \phi\> \dif s, \quad \forall \phi\in C_c^\infty
\ee
 has a {\em unique} solution $\{\varrho_t\}\subseteq \sP(\R^d)$. Moreover, for each $t>0$, $\varrho_t\in B^\gamma_{q,\infty}$ with $\gamma$ and $q$ satisfying
 $0<\gamma < \a(\a +\beta -1), \quad 1\leq q<\frac{d}{d+\gamma-\a(\a+\beta-1)}$ if $\alpha\in(0,1]$ and $0<\gamma < (\a +\beta -1)\wedge \tfrac{\vartheta}{\a}, \quad 1\leq q<\frac{d}{d+\gamma-(\a+\beta-1)\wedge \frac{\vartheta}{\a}}$ if $\alpha\in(1,2)$.
\ec

\br
The above result can also be seen as a probabilistic approach to the theory of regularity of solutions to non-local partial differential equations. We give a probabilistic proof for the well-posedness as well as regularity estimates for linear Fokker-Plank equation with singular coefficients and initial data.  
\er

\medskip

This paper is organized as follows: In Section 2, we recall some basic knowledge from Littlewood-Paley theory for later use. We establish apriori estimates for \eqref{Eq-PDE} in H\"older-Besov spaces in Section 3. In Section 4, we  prove the well-posedness of martingale problem associated with $\sL_{\kappa, b}^\a$.  In section 5, we show the one dimensional distribution of the martingale solution has a density in some Besov space. 

\section{Perliminary}
In this section, we recall some basic concepts and properties of Littlewood-Paley decomposition that will be used later.

Let $\sS(\R^d)$ be the Schwartz space of all rapidly decreasing functions, and $\sS'(\R^d)$ the dual space of $\sS(\R^d)$. Given $f\in\sS(\R^d)$,
let $\sF f=\hat f$  be the Fourier transform of $f$ defined by
$$
\hat f(\xi):=(2\pi)^{-d/2}\int_{\mR^d}\e^{-\mathrm{i}\xi\cdot x} f(x)\dif x.
$$

Let $\chi:\mR^d\to[0,1]$ be a smooth radial function with
$$
\chi(\xi)=1,\ |\xi|\leq 1,\ \chi(\xi)=0,\ |\xi|\geq 3/2.
$$
Define
$$
\cC:=B_{3/2}\backslash \overline{B_{1/2}}=\{x\in\R^d: 1/2< |x|<3/2\}; \quad \varphi(\xi):=\chi(\xi)-\chi(2\xi).
$$
It is easy to see that $\varphi\geq 0$ and supp $\varphi\subset \cC$ and
\begin{align}\label{EE1}
\chi(2\xi)+\sum_{j=0}^k\varphi(2^{-j}\xi)=\chi(2^{-k}\xi)\stackrel{k\to\infty}{\to} 1.
\end{align}
In particular, if $|j-j'|\geq 2$, then
$$
\mathrm{supp}\varphi(2^{-j}\cdot)\cap\mathrm{supp}\varphi(2^{-j'}\cdot)=\emptyset.
$$
In this paper we shall fix such $\chi$ and $\varphi$ and also introduce another nonnegative function $\tilde \varphi\in C_c^\infty(\R^d)$ supported on $B_2\backslash \overline{B_{1/4}}$ and $\tilde \varphi=1$ on $\cC$ for later use.

We introduce the definition of Besov space below.
\bd\label{Def-Besov}
The dyadic block operator $\Delta_j$ is defined by
$$
\Delta_j f:=
\left\{
\begin{array}{ll}
\sF^{-1}(\chi(2\cdot) \sF f), & j=-1, \\
\sF^{-1}(\varphi(2^{-j}\cdot) \sF f),& j\geq 0.
\end{array}
\right.
$$
For $s\in\mR$ and $p,q\in[1,\infty]$, the Besov space $B^s_{p,q}$ is defined as the set of all $f\in\sS'(\R^d)$ with
$$
\|f\|_{B^s_{p,q}}:=\left(\sum_{j\geq -1}2^{jsq}\|\Delta_j f\|_p^q\right)^{1/q}<\infty;
$$
If $p=q=\infty$, we denote $\sC^s:= B^s_{\infty, \infty}$.
\ed
Let
$$
h:=\sF^{-1} \varphi, \quad \tilde h:=\sF^{-1} \tilde\varphi,\quad  h_{-1}:=\sF^{-1} \chi(2\cdot);
$$
\begin{align*}
h_j:=\sF^{-1}\varphi(2^{-j}\cdot)=2^{jd}h(2^j\cdot),  \quad j\geq 0.
\end{align*}

By definition it is easy to see that
\begin{align}\label{Eq-Delta-f}
\Delta_j f(x)=(h_j*f)(x)=\int_{\mR^d}h_j(x-y)f(y)\dif y,\ \ j\geq -1.
\end{align}
\bd
The low-frequency cut-off operator $S_{j}$ is defined by
$$
{S_{j} f:=\sum_{j^{\prime} \leq j-1} \Delta_{j^{\prime}} f}.
$$
The  paraproduct of $f$ by $g$ is defined by
$$
T_{f} g:= \sum_{j\geq-1} S_{j-1} f \Delta_{j} g.
$$
The remainder of $f$ and $g$ is defined by
$$
R(f, g)=\sum_{|k-j| \leq 1} \Delta_{k} f \Delta_{j} g.
$$
\ed

The following two Lemmas can be found in \cite{Tr}.
\bl
If $s>0, s\notin \N$, then
$$
\sC^s:= B^s_{\infty, \infty}\asymp C^s,
$$
where $C^s$ is the usual H\"older space.
\el

\bl[Bernstein's inequalities]\label{Le13}
For any $1\leq p\leq q\leq \infty$ and $j\geq 0,$ we have
\begin{align}\label{Eq-BI1}
\|\nabla^k\Delta_j f\|_q\leq C_p 2^{(k+d(\frac{1}{p}-\frac{1}{q}))j}\|\Delta_jf\|_p,\  k=0,1,\cdots,
\end{align}
and
\begin{align}\label{EE99}
\|(-\Delta)^{s/2}\Delta_j f\|_q\leq C_p 2^{(s+d(\frac{1}{p}-\frac{1}{q}))j}\|\Delta_jf\|_p, \quad s\geq 0.
\end{align}
\el

\section{Schauder Estimate for \eqref{Eq-PDE}}
In this section, we establish the Schauder type estimate for \eqref{Eq-PDE} and its well-posedness in Besov-H\"older space.

\subsection{The case $\kappa(x,z)=\kappa(z)$} The following assumptions will be needed in this subsection.

\ba\label{Aspt2}
There are constants $r_0, \delta_0, \Lambda,\Lambda_2>0$ such that
\begin{align}\label{AS4}
|\{z:\kappa(z)> \Lambda\}\cap B_r|\geq \delta_0 r^d, \quad r\in (0,r_0]
\tag{\bf H$_4$};
\end{align}
\begin{align}\label{AS5}
\kappa(z)\leq \Lambda_2, \ z\in \R^d;\ \ \1_{\alpha=1}\int_{\{r<|z|<R\}} z\cdot\kappa(z)\dif z=0,\ 0<r<R<\infty
\tag{\bf H$_5$}.
\end{align}
\ea

\medskip

Recalling that $\cC:=\{x\in\R^d: 1/2< |x|<3/2\}$. Define
$$
\sB=\Big\{u\in \sS(\R^d): \mathrm{supp} \ \hat{u}\in \cC\Big\},\ \  J(u)=\Big\{x\in \R^d: |u(x)|=\|u\|_\infty \Big\}. $$

We have the following important frequency localized maximum principle.
\bl\label{Le-Max1}
There exists a number $c=c(d, \a, r_0, \delta_0)>0$ such that  for any $\kappa$ satisfying \eqref{AS4}, the following maximal inequality holds:
\be\label{Eq-Positive}
\inf_{u\in\sB} \inf_{x\in J(u)} \Big\{\sgn(u(x))\cdot (-\sL^\a_\kappa u(x))\Big\}\geq c\cdot\Lambda\|u\|_\infty.
\ee
\el

The following simple lemma is needed in the proof of Lemma \ref{Le-Max1}.

\bl\label{Le-Identity}
Suppose $f$ is a real analytic function on $\R^d$,  if $f$ vanishes on a measurable subset of $\R^d$  whose Lebesgue measure is positive, then $f\equiv 0$ on $\R^d$.
\el
\bpf
We prove the lemma by induction. Let $m_k$ be the Lebesgue measure on $\R^k$.
\begin{itemize}
	\item If $d=1$, then $f$ is analytic with $f|_E\equiv 0$ and $m_1(E)>0$, which implies zero points of $f$ must have an  accumulation point on the line, by identity theorem, $f\equiv 0$.
	\item Assume the claim holds for $d-1$. If $m_d(E)>0$, then by Fubini theorem, there is a set $E_1\subset \R$ with $m_1(E_1)>0$, such that for any $x_1\in E_1$,
	$$
	m_{d-1} (E\cap \{x_1\}\times \R^{d-1})>0.
	$$
	By  induction hypothesis, for each $x_1\in E_1$, function $z\mapsto f(x_1, z)$  vanishes identically.  Since $m_1(E_1)>0$, we can find $x_1^n\in E_1$, $x_1^n\to a$. Now for each $z\in \R^{d-1}$, function $x_1\mapsto f(x_1, z)$ is real analytic, its zero points has an accumulation point $a$. By the conclusion for 1 dimensional case, we get $f(x_1, z)\equiv 0$.
\end{itemize}
\epf

\bpf[\bf Proof of Lemma \ref{Le-Max1}]
 Without loss of generality, we can assume $\Lambda=1$. Define
$$
\sA(r_0, \delta_0):= \left\{\kappa: \kappa \ \mbox{satisfies}\  \eqref{AS4}\ \mbox{with}\ \Lambda=1\right\},
$$
$$
c:= \inf_{\kappa\in \sA(r_0,\delta_0)}\inf_{u\in\sB} \inf_{x\in J(u)} \Big\{\sgn(u(x))\cdot (-\sL^\a_\kappa u(x))\Big\}/\|u\|_\infty.
$$
We emphasize that the constant $c$ only depends on $d, \a, r_0, \delta_0$. By the definition of $c$, there exists a sequence of smooth functions $w_n\in \sS(\R^d)$ satisfying $\mathrm{supp}\, \hat{w}_n\subset \cC$, $x_n\in J({w_n})$ and $\kappa_n(z)\in \sA(r_0,\delta_0)$ such that
$$w_n(x_n)=\max_{x\in \R^d} |w_n| =1,\ \ \lim_{n\to\infty}\big[-\sL^\a_{\kappa_n} w_n(x_n)\big]=c. $$
Let $u_n(x):=w_n(x_n+x)$, it's easy to see that  $u_n\in \sB$ and
\be\label{eq-Ln(un)}
u_n(0)=\max_{x\in\R^d} |u_n|(x)=1, \quad \lim_{n\to\infty}\big[-\sL^\a_{\kappa_n} u_n(0)\big]=c.
\ee
Notice that
$$
u_n(x)=\int_{\R^d} \tilde{h}(x-y) u_n(y)\d y,
$$
where $\tilde{h}$ is defined in section 2. For any $k\in \N$,
$$
\|\nabla^k u_n\|_\infty=\|\nabla^k \tilde{h}* u_n\|_\infty\leq \|\nabla^k \tilde{h}\|_{1}\|u_n\|_\infty\leq C_k.
$$
By Ascoli-Azela's lemma and  diagonal argument, there is a subsequence of $\{u_n\}$(still denoted by $u_n$ for simple) and $u\in C_b^\infty$ such that $\nabla^k u_n$ converges to $\nabla^k u$ uniformly on any compact set. Let $\chi_R(\cdot)=\chi(\cdot/R)$, where $\chi$ is the same function in section 2. For any $\phi\in \sS(\R^d)$,
\begin{align*}
\left|\int \phi(u_n-u)\right|\leq& \int |\phi\chi_R\cdot (u_n-u) | + \int |\phi(1-\chi_R) (u_n-u) |\\
\leq& \|\phi\|_{L^1} \|u_n-u\|_{L^\infty(B_{3R/2})} + 2 \sup_{|x|>R} |\phi(x)|.
\end{align*}
Let $n\to \infty$ and then $R\to\infty$, we get
$$
\<\phi, u_n\>\to \<\phi, u\>, \quad \forall \phi\in \sS(\R^d).
$$
i.e.  $u_n\to u$ in $\sS'(\R^d)$ and consequently, $\hat{u}_n\to \hat{u}$ in $\sS'(\R^d)$. For any $\phi\in \sS(\R^d)$ supported on $\R^d\backslash\cC$, we have
$$
\<\phi, \hat{u}\>=\lim_{n\to\infty} \<\phi, \hat{u}_n\>=0,
$$
which means $u$ is also supported on $\cC$.  Thus the complex-valued function
$$
U: z\mapsto (2\pi)^{-d}  \<\e^{i\<z,\xi\>}, \hat{u}\>
$$
is a holomorphic function on $\mC^d$ and $u=U|_{\R^d}$. This implies $u$ is a real analytic function.

\medskip

Now assume $c=0$, for any $\lambda\in (0,1)$, by \eqref{eq-Ln(un)} and the fact that $\nabla u_n(0)=0$, we have
\begin{align*}
-\sL^\a_{\kappa_n} u_n(0) = &\int_{B_{r_0}}  (u_n(0)-u_n(z)) \frac{\kappa_n(z)}{|z|^{d+\a}} \d z\\
\geq& \int_{B_{r_0}\cap\{u_n\leq \lambda\}\cap \{\kappa_n > 1\}}  (1-u_n(z)) \frac{\d z}{|r_0|^{d+\a}}\\
\geq & (1-\lambda) r_0^{-d-\a}|B_{r_0}\cap\{u_n\leq \lambda\}\cap \{\kappa_n > 1\}|.
\end{align*}
This yields
$$
\limsup_{n\to\infty} |B_{r_0}\cap\{u_n\leq \lambda\}\cap \{\kappa_n > 1\}| \leq (1-\lambda)^{-1}r_0^{d+\a} \lim_{n\to\infty}[-\sL_{\kappa_n}^\a u_n(0)]=0.
$$
Combining the above estimate and our assumption \eqref{AS4}, we get
\begin{align*}
&\liminf_{n\to\infty}|B_{r_0}\cap\{u_n>\lambda\}\cap \{\kappa_n > 1\}| \\
=&\liminf_{n\to\infty}|B_{r_0}\cap \{\kappa_n > 1\}|-\limsup_{n\to\infty}|B_{r_0}\cap\{u_n\leq \lambda\}\cap \{\kappa_n > 1\}|\geq  \delta_0r_0^{d}.
\end{align*}
One the other hand, $u_n \to u$ uniformly in $B_{r_0}$ implies
\begin{align*}
&|B_{r_0}\cap\{u>\lambda\}| = \lim_{n\to\infty} |B_{r_0}\cap\{ u_n> \lambda\}|\\
\geq &\liminf_{n\to\infty}|B_{r_0}\cap\{u_n>\lambda\}\cap \{\kappa_n > 1\}| \geq \delta_0 r_0^d.
\end{align*}
Notice that $u\leq1$, let $\lambda\uparrow1$ in the first term above, we obtain $|\{x\in B_{r_0}: u(x)=1\}|\geq \delta_0r_0^d>0$. Using Lemma \ref{Le-Identity}, we obtain $u\equiv 1$ on $\R^d$ i.e. $\hat{u}=\delta_0$, the Dirac measure.  However, as we see before, $\hat{u}$ must be supported on $\cC$,
this contradiction implies $c=c(d, \a, r_0, \delta_0)>0.$
\epf

\bc\label{Cor-Scal}
Let $R\geq 1$. Suppose $\kappa$ satisfies \eqref{AS4} and $\mathrm{supp}\ \hat u\subset R \cC:=\{x: x/R\in \cC\}$, then there is a positive constant $c=c(d, \a, r_0, \delta_0)$ such that
\be
\inf_{x\in J(u)} \Big\{\sgn(u(x))\cdot (-\sL_\kappa^\a u(x))\Big\}\geq c\, \Lambda R^\a\|u\|_\infty,
\ee
where $J(u)=\{x: |u(x)|=\|u\|_\infty\}$.
\ec
\bpf
Suppose $x_0\in J(u)$, define $u^{x_0}_R(x):=u(x_0+x/R)$, $\kappa_R(z):= R^{\a}\kappa(z/R)$.
By our assumption on $u$, one can see that $\mathrm{supp}\ \widehat{u_R^{x_0}}\subset \cC$ and $\kappa_R$ satisfies \eqref{AS4} with constant $\Lambda$ replaced by $\Lambda R^\a$.  Notice that
\begin{align*}
\begin{aligned}
\sL^\a_{\kappa_R} u^{x_0}_R(0)=&\int_{\R^d} \left(u(x_0+z/R)-u(x_0)\right) \frac{\kappa(z/R)}{|z/R|^{d+\a}} \d (z/R) \\
=& \int_{\R^d} \left(u(x_0+z)-u(x_0)\right) \frac{\kappa(z)}{|z|^{d+\a}} \d z=\sL^\a_\kappa u(x_0),
\end{aligned}
\end{align*}
by Lemma \ref{Le-Max1}, we obtain that
\begin{align*}
-\sgn(u(x_0))\cdot\sL^\a_\kappa u(x_0)=&-\sgn(u^{x_0}_R(0))\cdot\sL_{\kappa_R}^\a u^{x_0}_R(0) \\
\geq & c\, \Lambda R^\a \|u_R^{x_0}\|_\infty=c\,  \Lambda R^\a\|u\|_\infty.
\end{align*}
So we complete our proof.
\epf

We need the following simple commutator estimate.
\bl\label{Le-Comm}
For any $j\geq -1$, $\beta\in(0,1)$,
\be
\|[\Delta_j, b\cdot \nabla]u\|_\infty \leq C  2^{-\beta j} \|b\|_{\sC^\beta} \|\nabla u\|_{L^\infty},
\ee\label{Eq-Comm}
where $C=C(d, \beta)$.
\el
\begin{proof}
By \eqref{Eq-Delta-f}, we have
\begin{align*}
[\Delta_j, b\cdot \nabla]u(x)=\int_{\mR^d}h_j(y)(b(x-y)-b(x))\cdot \nabla u(x-y)\dif y,
\end{align*}
hence for any $\beta\in (0,1)$,
\begin{align}
\begin{split}
\|[\Delta_j, b\cdot\nabla]u\|_\infty&\leq \int_{\mR^d}h_j(y)\|b(\cdot-y)-b(\cdot)\|_{\infty}\|\nabla u\|_{L^\infty}\dif y\\
&\leq C \|b\|_{\sC^{\beta}}\|\nabla u\|_{L^\infty}\int_{\mR^d}|h_j(y)|\,|y|^{\beta}\dif y\\
&=C \|b\|_{\sC^\beta} \|\nabla u\|_{L^\infty} 2^{-\beta j} \int_{\mR^d}|2h(2y)-h(y)|\,|y|^{\beta}\dif y\\
&\leq C 2^{-\beta j} \|b\|_{\sC^\beta}\|\nabla u\|_{L^\infty}.
\end{split}
\end{align}
\end{proof}

\bt\label{Th-Holde-Const}
Suppose $\kappa$ satisfies \eqref{AS4},  if $\a\in (0,1]$ and $b\in \sC^\beta$ with $\beta\in (1-\a, 1)$; or $\a\in(1,2)$ and $b\in \sC^\beta$ with $\beta\in (-\frac{\a-1}{2}, 1)$. Then there a constant $\lambda_0$ such that for any $\lambda\geq \lambda_0$ and $f\in \sC^\beta$ \eqref{Eq-PDE} has a unique solution in $\sC^{\a+\beta}$. Moreover, we have the following apriori estimate
\be\label{Eq-Holder-Const}
(\lambda-\lambda_0)\|u\|_{\sC^\beta}+\|u\|_{\sC^{\a+\beta}}\leq C \|f\|_{\sC^\beta},
\ee
here $C=C(d,\a, \beta, r_0, \delta_0, \Lambda, \|b\|_{\sC^\beta})>0$, $\lambda_0=\lambda_0(d,\a, \beta, r_0, \delta_0, \Lambda, \|b\|_{\sC^\beta})\geq 0.$
\et
\bpf
 For $\a\in (0,1]$, we frist assume $u\in \sS(\R^d)$. Notice that $\Delta_j \sL_\kappa^\a=\sL^\a_\kappa\Delta_j$, we have
$$
\lambda \Delta_j u-\sL^\a_\kappa\Delta_j u-b\cdot\nabla \Delta_ju=\Delta_j f+[\Delta_j, b\cdot\nabla] u.
$$
For $j=-1$, then
$$
\lambda \Delta_{-1} u-\sL^\a_\kappa\Delta_{-1} u-b\cdot\nabla \Delta_{-1}u=\Delta_{-1} f+[\Delta_{-1}, b\cdot\nabla] u.
$$
Suppose $\Delta_{-1}u(x_{-1})=\|\Delta_{-1}u\|_\infty$, noticing $\sL^\a_\kappa\Delta_{-1} u(x_{-1})\leq 0$ and $\nabla u(x_{-1})=0$, we get
\begin{align*}
\lambda\|\Delta_{-1}u\|_\infty\leq& \lambda \Delta_{-1} u(x_{-1})-\sL^\a_\kappa\Delta_{-1} u(x_{-1}) \\
\leq &\|\Delta_{-1} f\|_\infty+\|[\Delta_{-1}, b\cdot\nabla] u\|_\infty\\
\leq&\|\Delta_{-1} f\|_\infty+C\|b\|_{\sC^\beta} \|u\|_{B^{1}_{\infty,1}}.
\end{align*}
For $j\geq 0$, assume $\sgn (\Delta_j u(x_j))\cdot\Delta_j u(x_j)=\|\Delta_ju\|_\infty$, by Lemma \ref{Le-Max1}
\begin{align*}
(\lambda+c 2^{\a j}))\|\Delta_j u\|_\infty=&\sgn (\Delta_j u(x_j))\cdot[\lambda \Delta_j u(x_j)+c2^{\a j} \Delta_j u(x_j)]\\
\leq& \|\lambda \Delta_j u-\sL^\a_\kappa\Delta_j u-b\cdot\nabla \Delta_ju\|_\infty\\
\leq & \|\Delta_jf\|_\infty+ \|[\Delta_j, b\cdot\nabla] u\|_\infty\\
\leq& \|\Delta_jf\|_\infty+  C 2^{-\beta j} \|b\|_{\sC^\beta} \|u\|_{B^1_{\infty,1}}.
\end{align*}
Combining the above inequalities and using interpolation,
\begin{equation*}
\begin{split}
\big(\lambda2^{\beta j} + c2^{(\a+\beta) j}\big)\|\Delta_j u\|_\infty \leq  2^{\beta j} \|\Delta_j f\|_\infty+ \|b\|_{\sC^\beta}\big(\eps \|u\|_{\sC^{\a+\beta}}+C_\eps \|u\|_{\sC^\beta}\big),
\end{split}
\end{equation*}
hence,
$$
(\lambda-C_\eps) \|u\|_{\sC^\beta}+(c-\eps C\|b\|_{\sC^\beta})\|u\|_{\sC^{\a+\beta}} \leq \|f\|_{\sC^\beta}.
$$
Choosing $\eps_0$ sufficiently small, such that $(c-\eps_0 \|b\|_{{\sC^\beta}})\geq \tfrac{c}{2}$, letting $\lambda_0=C_{\eps_0}$, we get \eqref{Eq-Holder-Const} for $u\in \sS(\R^d)$. Now if $u\in \sC^{\a+\beta}$, let $u_n:=n^d\eta(n\cdot)*\big(\chi(\tfrac{\cdot}{n})u\big)\in \sS(\R^d)$, where $\chi$ is the same function in section 2 and $\eta\in C_c^\infty(B_1)$, $\int \eta =1$. $f_n:=\lambda u_n-\sL^\a_\kappa u_n-b\cdot \nabla u_n$. So
 $$
(\lambda-\lambda_0)\|u_n\|_{\sC^\beta}+\|u_n\|_{\sC^{\a+\beta}}\leq C \|f_n\|_{\sC^\beta},
$$
by this, we obtain
\begin{align*}
(\lambda-\lambda_0)\|u\|_{\sC^\beta}+\|u\|_{\sC^{\a+\beta}}\leq &\limsup_{n\to\infty} \Big[(\lambda-\lambda_0)\|u_n\|_{{\sC^\beta}}+\|u_n\|_{{\sC^{\a+\beta}}}\Big]\\
\leq &C \limsup_{n\to\infty} \|f_n\|_{{\sC^\beta}}\leq C \|f\|_{{\sC^\beta}}.
\end{align*}

For $\a\in (1,2)$, we only prove the case $\beta\leq 0$ here. By choosing $\gamma\in (-\beta, \frac{\a-1}{2})$, and Bony's decomposition, we have
\begin{align*}
&\|\Delta_j(b\cdot \nabla u)\|_\infty\\
=&\left\|\Delta_j (T_b \nabla u)+\Delta_j (T_{\nabla u} b)+\Delta_j (R(b, \nabla u))\right\|_\infty\\
\leq & \sum_{\substack{k\leq l-2;\\|j-l|\leq 3}} \|\Delta_k b\|_\infty \|\Delta_l \nabla u\|_\infty+\sum_{\substack{l\leq k-2;\\|j-k|\leq 3}} \|\Delta_k b\|_\infty \|\Delta_l \nabla u\|_\infty+ \sum_{\substack{|k-l|\leq 1;\\k,l\geq j-2}}  \|\Delta_k b\|_\infty \|\Delta_l \nabla u\|_\infty\\
\leq & C_\gamma \|\nabla u\|_{\sC^\gamma} \|b\|_{\sC^\beta}\left(  j 2^{-\beta j}\cdot  2^{-\gamma j} +  2^{-\beta j}  + 2^{-(\beta+\gamma)j}\right)\\
\leq &C_\gamma \|u\|_{\sC^{1+\gamma}} \|b\|_{\sC^\beta} 2^{-\beta j}.
\end{align*}
Notice that,
\begin{align*}
\lambda \Delta_j u-\sL^\a_\kappa \Delta_j u =-\Delta_j (b\cdot \nabla u)+ \Delta_j f.
\end{align*}
Like before, we have
\begin{align*}
\begin{aligned}
(\lambda+c 2^{\a j}))\|\Delta_j u\|_\infty=&\sgn (\Delta_j u(x_j))\cdot[\lambda \Delta_j u(x_j)+c2^{\a j} \Delta_j u(x_j)]\\
\leq& \|\lambda \Delta_j u-\sL^\a_\kappa\Delta_j u\|_\infty\\
\leq & \|\Delta_jf\|_\infty+ \|\Delta_j(b\cdot\nabla u)\|_\infty\\
\leq& C 2^{-\beta j} (\|f\|_{\sC^\beta}+ \|b\|_{\sC^\beta} \|u\|_{\sC^{1+\gamma}}).
\end{aligned}
\end{align*}
Noticing that $1+\gamma<\a+\beta$, by interpolation, we get \eqref{Eq-Holder-Const}.
\epf

The next lemma will be used later.
\bl\label{Le-Lf-bdd}
Suppose $\kappa(z)$  satisfies \eqref{AS5}, then there is a constant $C=C(\alpha,d)>0$ such that for all $\beta\in\mR$ and $u\in \sC^{\a+\beta}$,
$$
\|\sL^{\alpha}_\kappa u\|_{\sC^\beta}\leq C\Lambda_2\|f\|_{\sC^{\a+\beta}}.
$$
\el
\begin{proof}
Recall that $\tilde{\varphi}$ is a smooth function supported in $B_2\setminus B_{1/4}$ with $\tilde {\varphi}=1$ on $B_{3/2}\setminus B_{1/2}$ and $\tilde h:=\cF^{-1}(\tilde {\varphi})$. Since $\tilde h\in\sS$, it is easy to see that for some $c=c(\alpha,d)>0$,
$$
\|\sL^\a_\kappa \tilde h\|_1\leq C\Lambda_2<\infty.
$$
Let $\tilde{h}_j:=\cF^{-1} (\tilde {\varphi}(2^{-j}\cdot))$ for $j=0,1,2,\cdots$. By scaling, we have
$$
\|\sL^\a_\kappa \tilde{h}_j\|_1\leq C\Lambda_2 2^{\a j},\ \ j=0,1,2,\cdots.
$$
Since $\widehat{\Delta_j f}=\varphi(2^{-j}\cdot) \hat f=\tilde {\varphi}(2^{-j}\cdot)\varphi(2^{-j}\cdot)\hat f$, we have $\Delta_j f=\tilde h_j*\Delta_j f$ and
$$
\|\Delta_j \sL^\a_\kappa  f\|_\infty= \|\sL^\a_\kappa (\tilde h_j* (\Delta_j f))\|_\infty
\leq\|\sL^\a_\kappa \tilde h_j\|_1\|\Delta_j f\|_\infty \leq C\Lambda_2 2^{\a j} \|\Delta_j f\|_\infty.
$$
Similarly, one can show
$$
\|\Delta_{-1} \sL^\a_\kappa  f\|_\infty\leq C\Lambda_2 \|\Delta_{-1}f\|_\infty.
$$
 Hence,
\begin{align*}
\|\sL^\a_\kappa f\|_{\sC^\beta}= \sup_{j\geq -1}2^{\beta j}\|\Delta_j \sL^\a_\kappa  f\|_\infty
\leq C\Lambda_2  \sup_{j\geq -1} 2^{\beta j} 2^{\a j} \|\Delta_j f\|_\infty=C\Lambda_2 \|f\|_{\sC^{\alpha+\beta}}.
\end{align*}
The proof is complete.
\end{proof}

\subsection{General case}
Denote
\begin{align*}
\delta_z f(x):= f(x+z)-f(x), \quad \delta_z^{\a}f(x):=& f(x+z)-f(x)-z^{(\a)}\cdot \nabla f(x).
\end{align*}
We need the following lemma.
\bl\label{Le-Lf-Var}
Suppose $\a\in (0,2)$ and $ \kappa(x,z) $ satisfies \eqref{AS2} and \eqref{AS3}, then
\begin{enumerate}
\item for any $\beta\in (0,\vartheta]$, we have
\be\label{Eq-Lf-bdd2}
\|\sL^\a_\kappa u\|_{\sC^\beta}\leq C \Lambda_2 \|u\|_{\sC^{\a+\beta}}+ C_\theta\Lambda_3 \|u\|_{\sC^{\a+\theta}},
\ee
where $\theta\in (0,\beta)$.
\item for any $\beta \in(-(\a\wedge \vartheta),0]$, we have
\be\label{Eq-Lf-bdd1}
\|\sL^\a_\kappa u\|_{\sC^\beta}\leq C (\Lambda_2+\Lambda_3) \|u\|_{\sC^{\a+\beta}}.
\ee
\end{enumerate}
\el

\bpf
(1). Suppose $\a\in (0,1]$ and $\beta\in(0, \vartheta]$.
For any $x_0\in \R^d$, define
$$
\sL^\a_{0}u(x)= \int_{\R^d} \delta_z^\a  u(x) \frac{\kappa(x_0,z)}{|z|^{d+\a}}\d z.
$$
Notice that $|\sL^\a_\kappa u(x_0)|=|\sL^\a_0 u(x_0)|,$ by Lemma \ref{Le-Lf-bdd}, we get
$$
\|\sL^\a_0 u\|_{L^\infty} \leq C\Lambda_2 \|u\|_{\sC^{\a+\beta}}.
$$
	
For any $x\in B_{1}(x_0)$ and $\theta\in (0,\beta)$, by definition
\begin{align*}
|\sL^\a_\kappa u(x)-\sL^\a _0u(x) |\leq& \left|\int_{\R^d} \delta_z^\a   u(x)\frac{(\kappa(x,z)-\kappa(x_0,z))}{|z|^{d+\a}}\d z\right|\\
\leq & \Lambda_3  |x-x_0|^\beta \int_{\R^d} |\delta_z^\a   u(x)|\frac{\d z}{|z|^{d+\a}}\\
\leq & C_\theta \Lambda_3|x-x_0|^\beta\|u\|_{C^{\a+\theta}}.
\end{align*} 	
Since
$$
|\sL^\a_\kappa u(x)-\sL^\a_\kappa u(x_0) |\leq |\sL^\a_\kappa u(x)-\sL _0^\a u(x) |+ |\sL^\a_0 u(x)-\sL^\a_0 u(x_0) |,
$$
by the Lemma \ref{Le-Lf-bdd}, if $\beta\in (0, \vartheta]$,
\begin{align*}
|\sL^\a_0 u(x)-\sL^\a_\kappa u(x_0) |\leq |\sL^\a_0 u(x)-\sL^\a_0 u(x_0) |\leq C \Lambda_2 \|u\|_{\sC^{\a+\beta}} |x-x_0|^\beta.
\end{align*}	
Combining the above inequalities, we get \eqref{Eq-Lf-bdd2}.

\medskip
(2).
We only prove the case $\a\in (1,2)$ and $\beta\in (-\vartheta, 0]$, which is harder and the only case that will be used below. Denote $\kappa_z(y):= \kappa(y,z),$
by definite we have
\be\label{eq-Ij}
\begin{aligned}
\Delta_j \sL^\a_\kappa u(x)=& \int_{\R^d} h_j(x-y)\  \d y \int_{\R^d} \delta_z^\a u(y) \ \frac{\kappa(y,z)}{|z|^{d+\a}}\  \d z \\
=& \int_{\R^d}  \left (\int_{\R^d} \big[\delta_z^\a u(y) \kappa_z(y)\big] h_j(x-y)\  \d y\right) \ \frac{\d z}{|z|^{d+\a}}.
\end{aligned}
\ee
Denote
$$
I_j(x, z)= \int_{\R^d} \big[\delta_z^\a u(y) \kappa_z(y)\big] h_j(x-y)\  \d y,
$$
We drop the index $x$ below for simple. By Bony's decomposition,
\begin{align*}
|I_j(z)| &=\left|\Delta_j \sum_{k, l\geq -1} \big[(\delta_z^\a \Delta_k  u)\cdot \Delta_l \kappa_z \big] \right| \\
=& \left|\Delta_j \left(\sum_{k\leq l-2} \delta_z^\a \Delta_k  u\cdot \Delta_l \kappa_z +\sum_{l\leq k-2} \delta_z^\a \Delta_k  u\cdot \Delta_l \kappa_z +\sum_{|k-l|\leq 1} \delta_z^\a \Delta_k  u\cdot \Delta_l \kappa_z \right)\right|\\
&\leq  \sum_{\substack{k\leq l-2;\\ |l-j|\leq 3}} \big|\delta_z^\a \Delta_k  u\cdot \Delta_l \kappa_z \big|+ \sum_{\substack{l\leq k-2;\\ |k-j|\leq 3}} \big| \delta_z^\a \Delta_k  u\cdot \Delta_l \kappa_z \big| +\sum_{\substack{k,l\geq j-2;\\|k-l|\leq 1}} \big|\delta_z^\a \Delta_k  u\cdot \Delta_l \kappa_z \big|\\
&=: I^{(1)}_j(z)+I^{(2)}_j(z)+I^{(3)}_j(z).
\end{align*}
Roughly speaking, the first inequality above holds because the Fourier transforms of $\sum_{k: k\leq l-2} \Delta_k f\Delta_l g$ and $\1_{|k-l|\leq1} \Delta_k f\Delta_l g$ are supported around $2^l\cC$ and $2^{l}B_1$ respectively. Noticing that by Bernstein's inequality
\be\label{eq-1order}
\begin{aligned}
|\delta_z^\a \Delta_k u(y)|=& \left| \int_0^1 z\cdot \big[\nabla \Delta_k u(y+tz) -\nabla \Delta_k u(y)\big] \d t\right|\\
\leq & 2|z| \|\nabla \Delta_k u\|_\infty \leq C \|u\|_{\sC^{\gamma}}  |z|2^{(1-\gamma)k},
\end{aligned}
\ee
and
\begin{align}\label{eq-2order}
|\delta_z^\a \Delta_k u(y)|\leq |z|^2 \|\nabla^2 \Delta_k u\|_\infty\leq C\|u\|_{\sC^{\gamma}} |z|^22^{(2-\gamma) k},
\end{align}
where $\gamma:= \a+\beta$.  Next we estimate each $I_j^{(i)}(z)$, we only need to care about the case when $j$ is large, say $j\geq 10$.
\begin{itemize}
\item If $|z|<2^{-j}$:  for $I_j^{(1)}(z)$, by \eqref{eq-2order} and noticing that $2-\gamma>0$,  we have
\be\label{eq-Ij1-1}
\begin{aligned}
I_j^{(1)}(z) =&\sum_{\substack{k\leq l-2; \\ |l-j|\leq 3}} \big|\delta_z^\a \Delta_k  u\cdot \Delta_l \kappa_z \big| \\
\leq& C \|u\|_{\sC^\gamma} \|\kappa_z\|_{\sC^\vartheta} \sum_{k\lesssim  j}  |z|^2 2^{(2-\gamma)k} 2^{-\vartheta j}\\
\leq&  C \|u\|_{\sC^\gamma} \|\kappa_z\|_{\sC^\vartheta} |z|^2 2^{(2-\gamma-\vartheta)j}.
\end{aligned}
\ee

Similarly, for $I_j^{(2)}(z)$, by \eqref{eq-2order} and noticing that $\vartheta>0$,  we have
\be
\begin{aligned}
I_j^{(2)}(z) \leq& C \|u\|_{\sC^\gamma}\|\kappa_z\|_{\sC^{\vartheta}} \sum_{l\lesssim j} |z|^2 2^{(2-\gamma)j} 2^{-\vartheta l}\\
\leq & C \|u\|_{\sC^\gamma}\|\kappa_z\|_{\sC^{\vartheta}} |z|^2 2^{(2-\gamma)j}.
\end{aligned}
\ee
For $I^{(3)}_j(z)$,  we choose  $\eps_0\in (0, (\beta+\vartheta)\wedge (2-\a))$, by \eqref{eq-1order},
\eqref{eq-2order} and noticing that $1-\gamma-\vartheta<0$ and $2-\a-\eps_0>0\vee(2-\gamma-\vartheta)$, we have
\be
\begin{aligned}
&I^{(3)}_j(z) \leq \sum_{\substack{|k-l|\leq 1; \\k, l> -\log_2 |z|-2}}\big|\delta_z^\a \Delta_k  u\cdot \Delta_l \kappa_z \big| + \sum_{\substack{|k-l|\leq 1;\\j-3\leq k, l \leq -\log_2 |z|}} \big|\delta_z^\a \Delta_k  u\cdot \Delta_l \kappa_z \big|\\
\leq &C\|\kappa_z\|_{\sC^\vartheta} \|u\|_{\sC^\gamma}\Big( \sum_{k\geq -\log_2 |z|}|z|2^{(1-\gamma)k} 2^{-\vartheta k}+\sum_{j-2\leq k\leq -\log_2|z|}  |z|^2 2^{(2-\gamma)k} 2^{-\vartheta k}\Big) \\
\leq & C\|u\|_{\sC^\gamma} \|\kappa_z\|_{\sC^\vartheta} \left(|z|^{\gamma+\vartheta}+ |z|^2 \sum_{k\leq -\log_2 |z|} 2^{(2-\a-\eps_0)k} \right)\\
\leq & C \|u\|_{\sC^\gamma} \|\kappa_z\|_{\sC^\vartheta} |z|^{\a+\eps_0}.
\end{aligned}
\ee
\item $|z|\geq 2^{-j}$: for $I_j^{(1)}(z)$, notice that $\gamma<2$ and $1-\gamma-\vartheta<0$, we have
\be\label{eq-Ij1-2}
\begin{aligned}
 &I_j^{(1)}(z)\leq C\|u\|_{\sC^\gamma} \|\kappa_z\|_{\sC^\vartheta} \left( \sum_{-(1\wedge \log_2 |z|)\leq k\leq j} |z|2^{(1-\gamma)k} 2^{-\vartheta j} +\sum_{-1\leq k< -(1\wedge \log_2|z|)} |z|^2 2^{(2-\gamma)k} 2^{-\vartheta j}\right)\\
 \leq& C\|u\|_{\sC^\gamma} \|\kappa_z\|_{\sC^\vartheta} \left(\1_{\gamma>1}|z|^{\gamma} 2^{-\vartheta j}+ \1_{\gamma=1} |z| j2^{-\vartheta j}+\1_{\gamma<1} |z|2^{(1-\gamma-\vartheta)j} + |z|^\gamma 2^{-\vartheta j}\right)\\
 \leq &C \|u\|_{\sC^\gamma} \|\kappa_z\|_{\sC^\vartheta} (|z|^{\gamma}+\1_{\gamma<1} |z|2^{(1-\gamma-\vartheta)j} ).
 \end{aligned}
\ee
For $I_j^{(2)}(z)$, by \eqref{eq-1order} noticing that $\vartheta>0$, we have
\be\label{eq-Ij2-2}
\begin{aligned}
I_j^{(2)}(z)\leq&  \sum_{\substack{l\leq k-2;\\ |k-j|\leq 3}} \big| (\delta_z^\a \Delta_k  u\cdot \Delta_l \kappa_z) \big| \\
\leq& C\|u\|_{\sC^\gamma} \|\kappa_z\|_{\sC^\vartheta} \sum_{l\lesssim j} |z|2^{(1-\gamma)j} 2^{-\vartheta l} \\
\leq&  C\|u\|_{\sC^\gamma} \|\kappa_z\|_{\sC^\vartheta} |z|2^{(1-\gamma)j}.
\end{aligned}
\ee
For $I_j^{(3)}(z)$, by \eqref{eq-2order}, and notice that $1-\gamma-\vartheta<0$, we have
\be\label{eq-Ij3-2}
\begin{aligned}
I_j^{(3)}(z)\leq&   C\|u\|_{\sC^\gamma} \|\kappa_z\|_{\sC^\vartheta} \sum_{\substack{k,l\geq j-2;\\|k-l|\leq 2}}  |z| 2^{(1-\gamma)k} 2^{-\vartheta l}\\
\leq& C\|u\|_{\sC^\gamma} \|\kappa_z\|_{\sC^\vartheta} |z| 2^{(1-\gamma-\vartheta)j}.
\end{aligned}
\ee
\end{itemize}
Combining \eqref{eq-Ij1-1}-\eqref{eq-Ij3-2} and recalling that $\gamma=\a+\beta$, we obtain that for each $x\in \mR^d$,
\be\label{eq-Integral-Ij}
\begin{aligned}
|\Delta_j \sL_\kappa^\a u(x)| =&\left| \int_{\R^d} I_j(x, z) \frac{\d z}{|z|^{d+\a}}\right|\\
\leq &C\|u\|_{\sC^\gamma} \sup_{z\in \R^d}\|\kappa_z\|_{\sC^\vartheta}  \Big(2^{(2-\gamma)j}\int_{|z|<2^{-j}} |z|^{2-d-\a} \d z+\int_{|z|<2^{-j}}|z|^{\eps_0-d} \d z\\
&+ \int_{|z|\geq 2^{-j}} |z|^{\gamma-d-\a}\d z+ 2^{(1-\gamma)j} \int_{|z|\geq 2^{-j}} |z|^{1-d-\a} \d z\Big)\\
=& C\|u\|_{\sC^\gamma} \sup_{z\in \R^d}\|\kappa_z\|_{\sC^\vartheta}  \Big(2^{-\beta j}+1+ 2^{-\beta j}\Big)\\
\leq &  C\|u\|_{\sC^\gamma} \sup_{z\in \R^d} \|\kappa_z\|_{\sC^\vartheta} 2^{-\beta j}.
\end{aligned}
\ee
i.e.
$$
\|\sL_\kappa^\a u\|_{\sC^\beta} = \sup_{j\geq -1} 2^{-\beta j}\|\Delta_j \sL^\a_\kappa u\|_\infty\leq C\|u\|_{\sC^{\a+\beta}} \sup_{z\in \R^d} \|\kappa_z\|_{\sC^\vartheta}.
$$
So we complete our proof.
\epf

Before we proving our main results, let us give a brief discussion about our assumptions on $\kappa(x,z)$: let$\Lambda=\Lambda_1/(2c_d)$, where $c_d$ is the volume of unity ball in $\R^d$. By our assumptions \eqref{AS1} and \eqref{AS2}, we can see that for any $r\in (0,r_0]$, $x\in \R^d$,
\begin{align*}
& |B_r\cap \{\kappa(x, \cdot)\geq \Lambda\}| \geq \Lambda_2^{-1}\int_{B_r\cap \{\kappa(x,\cdot)\geq \Lambda\}} \kappa(x, z)\d z\\
=& \Lambda_2^{-1}\int_{B_r} \kappa(x, z)\d z-\Lambda_2^{-1}\int_{B_r\cap \{\kappa(x,\cdot)< \Lambda\}} \kappa(x, z)\d z\\
\geq& \Lambda_2^{-1}(\Lambda_1r^d-\Lambda |B_r|)\geq \frac{\Lambda_1}{2\Lambda_2} r^d.
\end{align*}
Thus, for each $x\in \R^d$, $\kappa(x,\cdot)$ satisfies \eqref{AS4} with $\Lambda=\Lambda_1/(2c_d)$ and $\delta_0=\Lambda_1/(2\Lambda_2)$.

Now we give the proof for Theorem \ref{Th-Holder-Var}.
\begin{proof}[\bf Proof of Theorem \ref{Th-Holder-Var}]
(1) Define
$$\sL^\a_0 u(x)= \int_{\R^d} \delta^\a_z  f(x) \frac{\kappa(x_0,z)}{|z|^{d+\a}}\d z.$$
Choose $\eta$ be a smooth function with compact support in $B_1$ and $\eta(x)=1$, if $x\in B_{\frac{1}{2}}$. Fixed $x_0\in \R^d$, define

$$\eta_\eps^{x_0}(x):=\eta\Big(\frac{x-x_0}{\eps}\Big);\,\,\, \kappa^{x_0}_\eps(x,z):=[\kappa(x,z)-\kappa(x_0,z)]\eta_\eps(x). $$
We omit the supscript $x_0$ below for simple. Define $v=u\eta_\eps$, then we have
\be\label{loc}
\begin{split}
&\lambda v-\sL^\a_0 v-b\cdot \nabla v\\
=&[\eta_\eps f-ub\cdot \nabla \eta_\eps+u\sL_\kappa^\a\eta_\eps]+\eta_\eps (\sL_\kappa^\a u-\sL^\a_0 u)+[\eta_\eps\sL^\a_0 u-\sL^\a_0(\eta_\eps u)-u\sL_\kappa^\a\eta_\eps].
\end{split}
\ee
Obviously,
\be\label{loc-1}
\|\eta_\eps f-ub\cdot \nabla \eta_\eps+u\sL_\kappa^\a\eta_\eps\|_{\sC^\beta} \leq C_\eps (\|f\|_{\sC^\beta}+\|u\|_{\sC^\beta} ).
\ee
Denote
$$
{\widetilde w}_\eps(x):=\eta_\eps(x) (\sL_\kappa^\a u(x)-\sL^\a_0 u(x))=\int_{\R^d} \delta^\a_z  u \frac{\kappa_\eps(x,z)}{|z|^{d+\a}}\d z.
$$
By \eqref{Eq-Lf-bdd2}, for any $\theta\in(0,\beta)$
\be\label{loc-2}
\begin{split}
\|\widetilde w_\eps\|_{\sC^\beta} \leq& C \sup_{z}\|\kappa_\eps(\cdot,z)\|_{L^\infty}\|u\|_{\sC^{\a+\beta}}+C_\theta \sup_{z}[\kappa_\eps(\cdot,z)]_{\vartheta}\|u\|_{\sC^{\a+\theta}}\\
\leq &C\eps^\beta  \|u\|_{\sC^{\a+\beta}}+ C_{\theta,\eps}\|u\|_{\sC^{\a+\theta}}.
\end{split}
\ee
Denote
$$
w_\eps(x):=[\eta_\eps\sL^\a_0 u-\sL^\a_0(\eta_\eps u)-u\sL^\a_0\eta_\eps](x)
$$
and $\delta_zf(x)=(f(x+z)-f(x))$, by definition, we have
\be\label{w-eps2}
 w_\eps(x)
=\int_{\R^d}\delta_z \eta_\eps(x)\ \delta_z u (x)\frac{\kappa(x_0, z)}{|z|^{d+\a}} \d z, \ee
and
\be\label{delta-w}
\begin{split}
w_\eps(x)-w_\eps(y)=& \int_{\R^d} \delta_z \eta_\eps (x)\big[\delta_z u (x)-\delta_z u (y)\big]\frac{\kappa(x_0, z)}{|z|^{d+\a}}\d z\\
&+ \int_{\R^d} \big[\delta_z \eta_\eps (x)-\delta_z \eta_\eps(y)\big]\delta_z u(y)\frac{\kappa(x_0, z)}{|z|^{d+\a}}\d z.
\end{split}
\ee
In order to estimate the $\sC^\beta$ norm of
$w_\eps$, for different cases we have to deal it separately.

(i)For $\a\in(0,1)$, by \eqref{w-eps2},
\begin{align*}
\big|w_\eps(x)|\leq& \int_{|z|\leq 1} \|\nabla \eta_\eps\|_{L^\infty}\|u\|_{L^\infty} |z|\frac{\kappa(x_0, z)}{|z|^{d+\a}}\d z+2\int_{|z|>1} \|\eta_\eps\|_{L^\infty} \|u\|_{L^\infty} \frac{\kappa(x_0, z)}{|z|^{d+\a}}\d z\\
\leq &C_\eps \|u\|_{L^\infty}.
\end{align*}
And by \eqref{delta-w},
\begin{equation*}
\begin{split}
&\big|w_\eps(x)-w_\eps(y)\big|\\
\leq&C |x-y|^\beta \Big(\|u\|_{C^\beta} \|\nabla \eta_\eps\|_{L^\infty} \int_{|z|\leq 1} \frac{\d z}{|z|^{d+\a-1}} +\|u\|_{C^\beta}  \|\eta_\eps\|_{L^\infty} \int_{|z|>1} \frac{\d z}{|z|^{d+\a}} \Big)\\
&+C  |x-y|^\beta \Big(\|\eta_\eps\|_{\sC^{1+\beta}} \|u\|_{L^\infty} \int_{|z|\leq 1} \frac{\d z}{|z|^{d+\a-1}}+  \|\eta_\eps\|_{C^\beta}  \|u\|_{L^\infty} \int_{|z|>1} \frac{\d z}{|z|^{d+\a}} \Big)\\
\leq &C_\eps |x-y|^\beta\|u\|_{\sC^\beta}.
\end{split}
\end{equation*}
Hence, we have
\be\label{loc-3} \|w_\eps\|_{ \sC^\beta}\leq C_\eps\|u\|_{ \sC^\beta}. \ee
Let $\lambda_0'$ be the constant $\lambda_0$ in Theorem \ref{Th-Holde-Const}, by \eqref{loc}, \eqref{loc-1}, \eqref{loc-2}, \eqref{loc-3}, Theorem \ref{Th-Holde-Const}, interpolation theorem and the discussion before this proof, we have
\begin{align*}
&\|u\|_{C^{\a+\beta} (B_{\eps/2}(x_0))} +(\lambda-\lambda'_0) \|u\eta_\eps^{x_0}\|_{\sC^{\beta}}\\
\leq & C \|v\|_{\sC^{\a+\beta}} +(\lambda-\lambda_0')\|v\|_{\sC^{\beta}}\\
\leq& C\eps^\beta \|u\|_{\sC^{\a+\beta}}+ C_{\theta,\eps} \|u\|_{\sC^{\a+\theta}}+C\|f\|_{\sC^\beta}\\
\leq& C\eps^\beta \|u\|_{C^{\a+\beta}}+ C_{\theta,\eps} \|u\|_{\sC^{\beta}}+C\|f\|_{\sC^\beta}\\
\leq & C\eps^\beta \sup_{x_0\in \R^d}\|u\|_{C^{\a+\beta}(B_{\eps/2}(x_0))}+ C_{\theta,\eps} \|u\|_{\sC^{\beta}}+C\|f\|_{\sC^\beta}.
\end{align*}
We can fixed $\eps_0$ sufficiently small, such that
$C\eps_0^\beta\leq 1/2$, so we have
\begin{align*}
\sup_{x_0\in\R^d} \left( \|u\|_{C^{\a+\beta} (B_{\eps_0/2}(x_0))} +(\lambda-\lambda'_0) \|u\eta_\eps^{x_0}\|_{\sC^{\beta}} \right)\leq C_{\eps_0}(\|f\|_{\sC^\beta}+\|u\|_{\sC^\beta}).
\end{align*}
This yields
$$
\|u\|_{\sC^{\a+\beta}} \leq C_{\eps_0} \sup_{x_0\in\R^d}\|u\|_{C^{\a+\beta}(B_{\eps_0/2}(x_0))}\leq  C_{\eps_0} \left(\|u\|_{\sC^\beta}+\|f\|_{\sC^\beta}\right),
$$
and
$$
C_{\eps_0}(\|f\|_{\sC^\beta}+\|u\|_{\sC^\beta})\geq (\lambda-\lambda_0') \sup_{x_0\in \R^d}\|u\eta_{\eps_0}^{x_0}\|_{\sC^\beta}\geq c_{\eps_0} (\lambda-\lambda_0') \|u\|_{\sC^\beta},
$$
where $c_{\eps_0}$ is a constant larger than $0$. Thus,
$$
\|u\|_{\sC^{\a+\beta}} + (\lambda-\lambda'_0) \|u\|_{\sC^\beta}\leq C_{\epsilon_0}(\|f\|_{\sC^\beta}+\|u\|_{\sC^\beta}).
$$
Letting $\lambda_0=\lambda_0'+C_{\eps_0}$, we obtain \eqref{Eq-Holder-Var}.

(ii)For $\a=1$, by \eqref{w-eps2} and \eqref{delta-w}, we have
$$
\|w_\eps\|_{L^\infty} \leq C_\eps\|u\|_{C^1},
$$
and
\begin{equation}\label{loc-4}
\begin{split}
&\left|w_\eps(x)-w_\eps(y)\right|\\
\leq& \left| \int_{|z|\leq \delta} \delta_z \eta_\eps (x) \Big[\delta_z u(x)-\delta_z u(y)\Big]\frac{\kappa(x_0,z)}{|z|^{d+\a}}\ \d z\right|+\left| \int_{|z|> \delta} \delta_z \eta_\eps (x) \Big[\delta_z u(x)-\delta_z u(y)\Big]\frac{\kappa(x_0,z)}{|z|^{d+\a}}\ \d z\right|\\
&+  \left|\int_{|z|\leq \delta} \delta_z u(y)\Big[\delta_z \eta_\eps (x)-\delta_z\eta_\eps(y)\Big]  \frac{\kappa(x_0,z)}{|z|^{d+\a}}\ \d z\right|+   \left|\int_{|z|> \delta} \delta_z u(y)\Big[\delta_z \eta_\eps (x)-\delta_z\eta_\eps(y)\Big]  \frac{\kappa(x_0,z)}{|z|^{d+\a}}\ \d z\right|\\
\leq& C\eps^{-1}  |x-y|^\beta \|u\|_{\sC^{1+\beta}} \int_{|z|\leq \delta}  \frac{\d z}{|z|^{d+\a-2}} +C  |x-y|^\beta \|u\|_{C^\beta} \int_{|z|>\delta} \frac{\d z}{|z|^{d+\a}}\\
&+C\eps^{-1-\beta}  |x-y|^\beta \|\nabla u\|_{L^\infty} \int_{|z|\leq \delta} \frac{\d z}{|z|^{d+\a-2}}\d z+C\eps^{-\beta}|x-y|^\beta \|u\|_{L^\infty}\int_{|z|>\delta} \frac{\d z}{|z|^{d+\a}}\\
\leq &\Big(C \eps^{-2} \delta^{2-\a} \|u\|_{\sC^{1+\beta}} +C(\eps,\delta)  \|u\|_{\sC^\beta}\Big)|x-y|^{\beta}.
\end{split}
\end{equation}
Hence,
$$ \|w_\eps\|_{\sC^{\beta}}\leq C\eps^{-2}\delta^{2-\a} \|u\|_{\sC^{1+\beta}}+C(\eps,\delta)  \|u\|_{\sC^\beta}. $$
Choosing $\delta=\eps^{\frac{2+\beta}{2-\a}}$, by Theorem \ref{Th-Holde-Const}, interpolation and above inequality, we get
\begin{align*}
&\|u\|_{\sC^{1+\beta} (B_{\eps/2}(x_0))} +(\lambda-\lambda'_0) \|u\|_{\sC^{\beta}(B_{\eps/2}(x_0))}\\
\leq& C\eps^\beta \|u\|_{\sC^{\a+\beta}}+ C_{\theta,\eps} \|u\|_{\sC^{1+\theta}}+C\eps^{-2}\delta^{2-\a} \|u\|_{\sC^{1+\beta}}+C(\eps,\delta)  \|u\|_{\sC^\beta}+C\|f\|_{\sC^\beta}\\
\leq &C \eps^\beta \|u\|_{\sC^{1+\beta}} +C_\eps  \|u\|_{\sC^\beta}+C\|f\|_{\sC^\beta}.
\end{align*}
Like the above case, we get \eqref{Eq-Holder-Var}.

\medskip

(2) For $\a\in (1,2)$, we only give the proof for $\beta\leq 0$ here. Like the previous cases, we have \eqref{loc}. Moreover, notice that $\beta\in (-(\frac{\a-1}{2}\wedge \vartheta), 0]$, it is easy to see that
$$
\|\eta_\eps f-ub\cdot \nabla \eta_\eps+u\sL_\kappa^\a\eta_\eps\|_{\sC^\beta} \leq C_\eps (\|f\|_{\sC^\beta}+\|u\|_{\sC^\vartheta} ),
$$
and
$$
\|w_\eps\|_{\sC^\beta}\leq C\|w_\eps\|_{L^\infty} \leq C \|u\|_{C^1}.
$$
For $\tilde w_\eps$, fixing $\gamma\in (-\beta, \vartheta)$, then for any $z\in \R^d$,
\begin{align*}
\| \kappa_\eps (\cdot,z)\|_{\sC^\gamma} =& \|[\kappa(\cdot,z)-\kappa(x_0,z)]\eta_\eps(\cdot)\|_{\sC^\gamma}\\
\leq & C (\eps^{\vartheta} [\eta_\eps]_{\gamma}+ [\kappa(\cdot,z)]_{C^\gamma(B_\eps(x_0))})\\
\leq & C \eps^{\vartheta-\gamma}.
\end{align*}
Using Lemma \ref{Le-Lf-Var} (2)(replace $\vartheta$ with $\gamma$) and above inequality, we obtain
\begin{align*}
\| {\tilde w}_\eps\|_{\sC^\beta}:=& \| \eta_\eps (\sL_\kappa^\a u-\sL^\a_0 u)\|_{\sC^\beta}=\left\| \int_{\R^d} \delta_z  u(\cdot) \frac{\kappa_\eps(\cdot,z)}{|z|^{d+\a}}\d z\right\|_{\sC^\beta} \\
\leq & C \eps^{\vartheta-\gamma} \|u\|_{\sC^{\a+\beta}}.
\end{align*}
Now by the similar argument as in the previous case, we get  \eqref{Eq-Holder-Var}.

\end{proof}

\section{Martingale Problem and Weak Solution}
Before going to the definition of martingale problem associated with $\sL^\a_{\kappa,b}$, let us briefly introduce the corresponding SDE.

Let $(\Omega, \bP, \cF)$ be a probability space and $N(\d r, \d z, \d s)$ be a Poisson random measure on $\R_+\times \R^d\times \R_+$ with intensity measure is $\d r\,\frac{\d z}{|z|^{d+\a}}\,\d s$.
Define
\begin{align}\label{Nalpha}
N^{(\a)}(\d r, \d z, \d s)=\left\{
\begin{aligned}
&N(\d r, \d z, \d s) &\quad \a\in(0,1)\\
&N(\d r, \d z, \d s)-\d r \frac{\1_{B_1}(z) \d z}{|z|^{d+\a}} \d s &\quad \a=1\\
&N(\d r, \d z, \d s)-\d r\frac{\d z}{|z|^{d+\a}}\d s &\quad \a\in(1,2)
\end{aligned}
\right.
\end{align}
Consider the following SDE driven by Poisson random measure $N$:
\be\label{Eq-Nsde}
X_t=X_0+\int_0^t \int_{\R^d}\int_0^\infty z\1_{ [0,\kappa(X_{s-},z))}(r) N^{(\a)}(\d r, \d z, \d s)+\int_0^t b(X_s) \d s.
\ee

As mentioned before, when $b$ is just a distribution,  the drift term $``\int_0^\cdot b(X_s) \d s"$ may not be a process with finite variation any more but an additive functional of $X$ with zero energy, which means $X$ may not be a semimartingale but a Dirichlet process. We give the precious definitions of Dirichlet processes and process of zero energy first.
\begin{definition}
 We say that a continuous adapted process $(A_t)_{t\in[0,T]}$ is a process of zero energy if $A_0=0$ and
 \begin{align*}
 \lim_{\delta\rightarrow0}\sup_{|\pi_T|<\delta}\bE\Big(\sum_{t_i\in\pi_T}|A_{t_{i+1}}-A_{t_i}|^2\Big)=0
 \end{align*}
 where $\pi_T$ denotes a finite partition of $[0,T]$ and $|\pi_T|$ denotes the mesh size of the partition.
 \end{definition}
 \begin{definition}\label{Diri}
 We say that an adapted process $(X_t)_{t\in[0,T]}$ is a Dirichlet process if
 \begin{align}\label{Diric}
 X_t=M_t+A_t
 \end{align}
 where $M$ is a square-integrable martingale and $A$ is an adapted process of zero energy.
\end{definition}
 \medskip

Suppose $\kappa(\cdot,z), b$ is smooth and bounded, then the above equation has a unique solution. By It\^o's formula(see \cite[Theorem 4.4.7]{Ap}), for any $f\in C_b^2$, we  have
\be\label{eq-Ito}
\begin{aligned}
&f(X_t)-f(X_0)\\
=& \int_0^t \int_{\R^d}\int_0^\infty [f(X_{s-}+z\1_{ [0,\kappa(X_{s-},z))}(r))-f(X_{s-})] \widetilde{N} (\d r, \d z, \d s)+ \int_0^t b\cdot\nabla f(X_s) \d s \\
&+\int_0^t \int_{\R^d}\int_0^\infty [f(X_{s-}+z\1_{ [0,\kappa(X_{s-},z))}(r))-f(X_{s-})-z\1_{ [0,\kappa(X_{s-},z))}(r)] \d r \frac{\d z}{|z|^{d+\a}} \d s \\
=&M^f_t+ \int_0^t \sL_{\kappa}^\a f(X_s)\d s+\int_0^t b\cdot\nabla f(X_s) \d s,
\end{aligned}
\ee
where
$$
M^f_t:=\int_0^t \int_{\R^d}\int_0^\infty [f(X_{s-}+z\1_{ [0,\kappa(X_{s-},z))}(r))-f(X_{s-})] \widetilde{N} (\d r, \d z, \d s).
$$

Thus, \eqref{Eq-Nsde} is the SDE associated with operator $\sL^\a_{\kappa, b}$ at least when the coefficients are regular. However, when $b\in \sC^\beta$ with $\beta\leq 0$, we must face up to the  problem of how to define the term $``\int_0^t b(X_{s})\dif s"$ in \eqref{Eq-Nsde} and $``\int_0^t b\cdot \nabla f(X_{s})\dif s"$ in \eqref{eq-Ito}.  Inspired by \cite{Zh-Zh1}, when considering the martingale problem associated with $\sL_{\kappa, b}^\a$, if $b\in \sC^\beta$ with $\beta\leq 0$, we need restrict ourselves to some probability measures on $\mD:=D(\R_+;\R^d)$ satisfying the following Krylov's type estimate:
\bd\label{Def-Kry}
(Krylov's type estimate) We call a probability measure $\mP\in\sP(\mD)$ satisfy Krylov's estimate with indices $\mu$ if for any $T>0$,
there are positive constants $C_T$ and $\gamma$
such that for all $f\in C^\infty$, $0\leq t_0<t_1\leq T$,
\begin{align}\label{Eq-Kr}
\mE\left|\int^{t_1}_{t_0}f(w_s)\dif s\right|^2\leq C_{T}|t_1-t_0|^{1+\gamma}\|f\|^2_{\sC^\mu},
\end{align}
where the expectation $\mE$ is taken with respect to $\mP$. All the probability measure $\mP$ with property \eqref{Eq-Kr} is denoted by $\sK^{\mu}(\mD)$.
\ed
We should point out that for arbitrary $f\in \sC^\beta$, there is no good smooth approximation sequence in space $\sC^\beta$. However,  the modifying approximation sequence $f_n:=f* \eta_n$ converges to $f$ in $\sC^\mu$, for any $\mu<\beta$. So given $f\in \sC^\beta$ with $\beta\leq 0$, in order to give a natural definition of $\int_0^t f(\om_s) \d s$ under some suitable  probability measure $\mP$, we have to restrict ourselves to $\mP\in \sK^\mu(\mD)$ with $\mu<\beta$.

\bp\label{Pr-Af}
Let $\mu< \beta\leq 0$, $\mP\in\sK^{\mu}(\mD)$, for any $f\in \sC^\beta$, there is a continuous $\cB_t(\mD)$-adapted process $A^f_t$ with zero energy and such that for any $T>0$,
\begin{align}\label{NL3}
\lim_{n\to\infty}\mE\left(\sup_{t\in[0,T]}\left|\int^t_0f_n(w_s)\dif s-A^f_t\right|\right)=0,
\end{align}
where $C_b^\infty\ni f_n
\overset{\sC^\mu}{\longrightarrow}f$. Moreover, the mapping $\sC^\mu \ni f\mapsto A^f_{\cdot}\in L^2(\mD,\mP; C([0,T]))$ is
a bounded linear operator and
for all $0\leq t_0<t_1\leq T$,
\begin{align}\label{Kr0}
\mE\left|A^f_{t_1}-A^f_{t_0}\right|^2\leq C_{T}(t_1-t_0)^{1+\gamma}\|f\|^2_{\sC^\mu},
\end{align}
where the constants $C_{T}$ and $\gamma$ are the same as in \eqref{Eq-Kr}.
\ep
Since the proof for this proposition is just the same with Proposition 3.2 in \cite{Zh-Zh1}, we omit the details here.

\medskip

Now we are on the position to give the definition of martingale problem.
\bd[Martingale Problem]\label{Def-MP}
\begin{enumerate}
\item If $b\in B_b(\R^d)$, we call a probability measure $\mP\in\sP(\mD)$ a martingale solution associated with $\sL^\a_{\kappa,b}$ starting from $x\in\mR^d$ if for any $f\in C_b^\infty$,
\begin{align}\label{Eq-MP1}
M^f_t:= f(\om_t)-f(x)-\int_0^t \sL^\a_{\kappa, b} f(\om_s)\dif s
\end{align}
is a continuous $\cB_t(\mD)$-martingale with $M^f_0=0$ under $\mP$. The set of the martingale solutions with starting point $x$ is denoted by $\sM_{\kappa,b}(x)$.
\item If $\mu<\beta\leq 0$, $b\in \sC^\beta$ with $\beta\leq 0$, we call a probability measure $\mP\in\sK^{\mu}(\mD)$ a martingale solution associated with $\sL^\a_{\kappa,b}$ starting from $x\in\mR^d$
if for any $f\in C_b^\infty$,
\begin{align}\label{Eq-MP2}
M^f_t:=f(w_{t})-f(x)-\int^{t}_0 \sL_\kappa^\a f(w_s)\dif s-A^{b\cdot\nabla f}_{t}
\end{align}
is a continuous $\cB_t(\mD)$-martingale with $M^f_0=0$ under $\mP$. The set of the martingale solutions $\mP\in\sK^{\mu}(\mD)$  and starting point $x$
is denoted by $\sM^{\mu}_{\kappa,b}(x)$.
\end{enumerate}
\ed

\medskip

By Theorem \ref{Th-Holder-Var} (1), immediately, we have
\bl\label{Le-WP1}
Suppose $\a\in (0,1]$, $\kappa(x,z)$ satisfies \eqref{AS1}-\eqref{AS3} with \\ $\max\{{0, (1-\a)}\}<\vartheta<1$, and $b\in \sC^\beta$ with $\beta\in (0, \vartheta)$,  then for any $x\in \R^d$, there is a unique element in $\sM_{\kappa, b}(x)$.
\el
\bpf
The Existence of martingale solution to \eqref{Eq-MP1} is trivial, since the coefficients are globally H\"older continuous. We only give the proof for uniqueness. Suppose $\mP_x \in \sM_{\kappa,b}(x)$. For any $f\in C_b^\infty$ and $\lambda\geq  \lambda_0$, where $\lambda_0$ is the constant in Theorem \ref{Th-Holder-Var}, let $u$ be the solution to \eqref{Eq-PDE} and $u_n:= u* \eta_n=n^d u*\eta(n\cdot)$.
By the definition of $\mP_x$ and It\^o's formula, we have
$$
\e^{-\lambda t} u_n (\om_t) - u_n (\om_0) = \int_0^t \e^{-\lambda s}[-\lambda u_n (\om_s) + \sL_{\kappa, b}^\a u_n(\om_s)] \d s+ \int_0^t \e^{-\lambda s} \d M^{u_n}_s,
$$
which implies
\be\label{eq-un}
u_n(x)=\mE_x \left( \int_0^\infty \e^{-\lambda t}[(\lambda u_n-\sL_{\kappa, b}^\a u_n)(\om_t)]  \d s\right)= \mE_x \left(\int_0^\infty \e^{-\lambda t} g_n(\om_t) \d t\right),
\ee
where
\be\label{eq-gn}
g_n= f*\eta_n + [(\sL_\kappa^\a u)*\eta_n-\sL_\kappa^\a (u*\eta_n)]+ [(b\cdot \nabla u)*\eta_n-b\cdot \nabla (u*\eta_n)].
\ee
Noticing that $u\in \sC^{\a+\beta}$ with $\beta>0$, we have
\begin{align*}
\begin{aligned}
&[(\sL_\kappa^\a u)*\eta_n-\sL_\kappa^\a (u*\eta_n)](x)\\
=&\int_{\R^d} \eta_n(x-y)  \d y\int_{\R^d} \delta_z^\a u(y) \frac{(\kappa(y,z)-\kappa(x,z))}{|z|^{d+\a}} \d z\\
\leq &\Lambda_3 \int_{\R^d} \eta_n(x-y) |x-y|^{\vartheta}\d y \int_{\R^d} \frac{|\delta_z^\a u(y)|}{|z|^{d+\a}} \d z\leq  C n^{-\vartheta} \|u\|_{\sC^{\a+\beta}}\to 0\ (n\to\infty).
\end{aligned}
\end{align*}
And also $[(b\cdot \nabla u)*\eta_n-b\cdot \nabla (u*\eta_n)]\to 0$ uniformly in $n$. Hence, $\{g_n\}$ is uniformly bounded and converges to $f$. Taking limit in both side of  \eqref{eq-un}, we obtain
$$
u(x)= \mE_x \left(\int_0^\infty \e^{-\lambda t} f(\om_t) \d t\right),
$$
which implies the one dimensional distribution of $\mP_x$ is unique and thus the uniqueness of $\mP_x$ follows(see \cite{Et-Ku} for details).
\epf

Next we consider the case when $\a\in (1,2)$ and $b$ is just a distribution.
\bl\label{Le-WP2}
Suppose $\a\in (1,2)$, $\kappa(x,z)$ satisfies \eqref{AS1}-\eqref{AS3} and $b\in \sC^\beta$ with $\beta\in (-(\frac{\a-1}{2}\wedge \vartheta), 0]$.  Then for each $x\in \R^d$, there is a unique
probability measure $\mP_x \in \sM^{\mu}_{\kappa, b}(x)$, for some $\mu<\beta$.
\el
\bpf
{\bf Uniqueness:} The proof is similar with the one of Lemma \ref{Le-WP1}. Suppose $-\vartheta<\mu<\beta$, $\mP_x\in \sM_{\kappa, b}^\mu(x)$, thanks to the fact $\mP_x\in\sK^{\mu}(\mD)$, we only need to show $g_n\to f$ in $\sC^\mu$, where $g_n$ is defined in \eqref{eq-gn}. Notice that $\sL_\kappa^\a u\in \sC^{\beta}$, $u*\eta_n\overset{\sC^{\a+\mu}}{\longrightarrow} u$ and by Lemma \ref{Le-Lf-Var} $\sL_\kappa^\a: \sC^{\a+\mu}\to \sC^\mu$ is bounded, we get
\begin{align*}
 &\| (\sL_\kappa^\a u)*\eta_n-\sL_\kappa^\a (u*\eta_n)\|_{\sC^\mu}\\
 \leq & \|(\sL_\kappa^\a u)*\eta_n-\sL_\kappa^\a u\|_{\sC^\mu}+\|\sL_\kappa^\a(u*\eta_n)-\sL_\kappa^\a u\|_{\sC^\mu}\to 0, \quad (n
 \to \infty).
\end{align*}
Similarly, we have
$$
\|(b\cdot \nabla u)*\eta_n-b\cdot \nabla (u*\eta_n)\|_{\sC^\mu}\to 0, \quad (n\to \infty).
$$
Thus we get $\lim_{n\to\infty}\|g_n-f\|_{\sC^\mu}= \|f*\eta_n-f\|_{\sC^\mu}=0$.

{\bf Existence:}
Let $b_n=b*\eta_n$, $\kappa_n(\cdot,z)=(\kappa(\cdot,z)*\eta_n)(\cdot)$. Let $X_t^n$ be the unique solution to the following SDE:
\begin{align*}
X^n_t=x+\int_0^t \int_{\R^d} \int_0^{\infty} z \1_{[0,\kappa_n(X_{s-}^n,z))}\widetilde N(\d r, \d z, \d s)+\int_0^t b_n(X^n_{s}) \d s,
\end{align*}
where $N$ and $\widetilde N= N^\a$ are defined at the beginning of this section. Then the probability measure $\mP_x^n=\bP\circ(X^{n}_t)^{-1}$ on $\mD$ is an element in $\sM_{\kappa_n, b_n}(x)$. For any $f\in C_b^\infty$, let $u_n^\lambda$ be the solution to
$$
\lambda u^\lambda_n-\sL^\a_{\kappa_n, b_n} u^\lambda_n = f.
$$
By It\^o's formula, for any stopping times $\tau_1\leq \tau_2$,
\begin{align*}
&u_n^\lambda(X_{\tau_2}^n)- u_n^\lambda(X_{\tau_1}^n)\\
=& \int_{\tau_1}^{\tau_2}\int_{\R^d}\int_0^{\kappa_n(X_{s-},z)}[u_n^\lambda(X_{s-}^n +z)-u_n^\lambda(X_{s-}^n )]   \widetilde N(\d r, \d z, \d s)\\
&+ \lambda \int_{\tau_1}^{\tau_2} u_n^\lambda (X_s^n ) \d s- \int_{\tau_1}^{\tau_2} f(X_{s}^n ) \d s.
\end{align*}
Hence,
\begin{align*}
\int_{\tau_1}^{\tau_2} f(X_{s}^n ) \d s =& \int_{\tau_1}^{\tau_2}\int_{\R^d}\int_0^{\kappa_n(X_{s-}^n,z)} [u_n^\lambda(X_{s-}^n +z)-u_n^\lambda(X_{s-}^n )]   \widetilde N(\d r, \d z, \d s) \\
&+ u_n^\lambda(X_{\tau_1}^n )-u_n^\lambda(X_{\tau_2}^n )+\lambda \int_{\tau_1}^{\tau_2} u_n^\lambda (X_s^n ) \d s.
\end{align*}
Denote
$$
M^n_t:=\int_{0}^{t}\int_{\R^d}\int_0^{\infty} \1_{\{r<\kappa_n(X_{s-}^n,z)\}}[u_n^\lambda(X_{s-}^n +z)-u_n^\lambda(X_{s-}^n )]   \widetilde N(\d r, \d z, \d s).
$$
By Burkholder-Davis-Gundy's inequality, we get that for any $\delta>0$, $m\in \N_+$ and bounded stopping time $\tau$,
\be\label{eq-moment-b}
\begin{aligned}
&\bE \left|\int_{\tau}^{\tau+\delta} f(X_{s}^n)  \d s\right|^{2}\\
\leq &  C_m \left\{ \bE ([M^n]_{\tau+\delta}-[M^n]_{\tau}) + \|u_n^\lambda \|_\infty^{2}+(\lambda \delta \|u^\lambda_n\|_\infty )^{2}\right\}\\
\leq & C_m \left\{ \bE ([M^{n}]_{\tau+\delta}-[M^{n}]_{\tau})  + [1+(\lambda\delta)^{2}]\|u_n^\lambda\|_\infty^{2}\right\}.
\end{aligned}
\ee
On the other hand,
\be\label{eq-Q-M}
\begin{aligned}
&[M^n]_{\tau+\delta}-[M^n]_{\tau}\\
=& \int_{\tau}^{\tau+\delta}\int_{\R^d}\int_0^\infty \1_{\{r<\kappa_n(X_{s-}^n,z)\}}[u_n^\lambda(X_{s-}^n +z)-u_n^\lambda(X_{s-}^n )]^2 N(\d r, \d z, \d s)\\
\leq & C \int_{\tau}^{\tau+\delta} \int_{\R^d}\int_0^{\Lambda_2}  (|z|^2\|\nabla u^\lambda_n\|_\infty^2\wedge \|u^\lambda_n\|_\infty^2) N(\d r, \d z, \d s) \\
=& C \int_{\tau}^{\tau+\delta} \int_{\R^d} \int_0^{\Lambda_2} g_n^\lambda(z) \widetilde N(\d r,\d z; \d s)+ C \int_{\tau}^{\tau+\delta} \int_{\R^d} \int_0^{\Lambda_2} g_n^\lambda(z) \d r\frac{\d z}{|z|^{d+\a}} \d s,
\end{aligned}
\ee
where
$$
g^\lambda_n(z):= |z|^2\|\nabla u^\lambda_n\|_\infty^2\wedge \|u^\lambda_n\|_\infty^2.
$$
By Theorem \ref{Th-Holder-Var} and interpolation, we have
\begin{align}\label{eq-Max-u}
\|u^\lambda_n\|_{\infty}\lesssim \lambda^{-\theta}\|f\|_{\sC^\mu}, \quad \|\nabla u^\lambda_n\|_{\infty}\lesssim \lambda^{\frac{1}{\a}-\theta}\|f\|_{\sC^\mu}, \quad \forall\mu\in (-(\tfrac{\a-1}{2}\wedge \vartheta), \beta],  \theta\in (0, 1+\tfrac{\mu}{\a}).
\end{align}
This yields
\be\label{eq-Max-g}
|g^\lambda_n(z)| \lesssim \|f\|_{\sC^\mu}^2 (|z|^2\lambda^{-2\theta+\frac{2}{\a}}\wedge \lambda^{-2\theta}), \quad \forall \mu\in (-(\tfrac{\a-1}{2}\wedge \vartheta), \beta],  \theta\in (0, 1+\tfrac{\mu}{\a}).
\ee
For any $\delta\leq \lambda_0^{-1}$, choosing $\lambda=\delta^{-1}$ and combining \eqref{eq-moment-b}-\eqref{eq-Max-g}, we get
\be\label{Eq-Xn-Kry}
\begin{aligned}
&\bE \left|\int^{\tau+\delta}_{\tau} f(X_{s}^n)  \d s\right|^{2}\\
\leq & C \delta \int_{\R^d} g_n^\lambda(z) \frac{\d z}{|z|^{d+\a}} +C  \|u^\lambda_n\|_\infty^2 \\
\leq& C \|f\|_{\sC^\mu}^2  \left(\delta \lambda^{-2\theta+\frac{2}{\a}}\int_{|z|<\lambda^{-1/\a}}|z|^{2-d-\a} \d z+\delta  \lambda^{-2\theta}\int_{|z|\geq \lambda^{-1/\a}} |z|^{-d-\a}\d z C+\lambda^{-2\theta}\right) \\
\leq&  C\|f\|_{\sC^\mu}^2 \delta \lambda^{1-2\theta}=C\|f\|_{\sC^\mu}^2\delta^{2\theta},
\end{aligned}
\ee
here $C$ is independent with $n$. Let $A^n_t: = \int_0^t b_n(X^n_s(x)) \d s$ and $\cT$ be the collection of all bounded stopping time. The above estimate and Burkholder-Davis-Gundy's inequality yield
\begin{align*}
&\sup_{\tau\in \cT}\bE |X_{\tau+\delta}^n -X_{\tau}^n  | \\
\leq & \sup_{\tau\in \cT} \bE \left( |A_{\tau+\delta}^n-A_{\tau}^n|  +\left| \int_\tau^{\tau+\delta} \int_{\R^d} \int_0^{\kappa_n(X_{s-}^n,z)} z \widetilde N(\d r, \d z, \d s)\right|\right)\\
\leq &\sup_{\tau\in \cT}\left(\bE \left|\int_{\tau}^{\tau+\delta} b_n (X^n_s) \d s\right|^2 \right)^{\frac{1}{2}}+ C \sup_{\tau\in \cT} \bE \left(\int_{\tau}^{\tau+\delta} \int_{\R^d} \int_0^{\Lambda_2} |z|^2 N(\d r, \d z, \d s) \right)^{\frac{1}{2}}\\
\leq& C \|b\|_{\sC^\beta} \delta^\theta+ C\sup_{\tau\in \cT} \bE \left[\int_{|z|\leq 1} |z|^2 N_\tau^\delta(\d z)\right]^{\frac{1}{2}}+ C\sup_{\tau\in \cT} \bE \left[\int_{|z|>1} |z|^2 N_\tau^\delta(\d z)\right]^{\frac{1}{2}}.
\end{align*}
where
$$
N^\delta_\tau(\d z):= \int_{\tau}^{\tau+\delta} \int_0^{\Lambda_2}N(\d r, \d z, \d s),
$$
it is not hard to see that $N^\delta_\tau$ is a Poisson random measure on $\R^d$ with intensity measure $\delta \Lambda_2 \frac{\d z}{|z|^{d+\a}}$.
Notice that for fixed $\om\in \Omega$, $N_\tau^\delta$ is a counting measure, by the elementary inequality: $(\sum_{k} |a_k|^p )^{1/p}\leq (\sum_{k} |a_k|^q )^{1/q}, \ \forall\, p\geq q>0$ and $\{a_k\}\subset \R$, we also have
$$
\left(\int_{|z|>1} |z|^2 N_\tau^\delta(\d z)\right)^{\frac{1}{2}} \leq  \int_{|z|>1} |z| N_\tau^\delta(\d z).
$$
Thus, for small $\delta\leq \lambda_0^{-1}$ we have
\begin{align*}
&\sup_{\tau\in \cT}\bE |X_{\tau+\delta}^n -X_{\tau}^n  | \\
\leq& C \|b\|_{\sC^\beta} \delta^\theta+ C\sup_{\tau\in \cT} \left[\bE \int_{|z|\leq 1} |z|^2 N_\tau^\delta(\d z)\right]^{\frac{1}{2}}+ C\sup_{\tau\in \cT} \bE \int_{|z|>1} |z| N_\tau^\delta(\d z)\\
\leq & C(\|b\|_{\sC^\beta} \delta^\theta+\delta^{\frac{1}{2}}+\delta)\lesssim \delta^{\frac{1}{2}},
\end{align*}
and consequently
\begin{align*}
\lim_{\delta\downarrow 0} \bE \sup_{\tau\in \cT} |X_{\tau+\delta}^n -X_{\tau}^n  | =0.
\end{align*}
By Aldous tightness criterion, we obtain that $\{ \mP_x^n := \bP\circ (X_t^n)^{-1}\}_{n\in \N}$ is tight. So, upon taking a subsequence, still denote by $n$, we can assume that $\mP_x^n\Rightarrow \mP_x$. By \eqref{Eq-Xn-Kry}, we also have
$$
\mE_x \left|\int^{t_1}_{t_0} f(\om_s)  \d s\right|^{2}= \lim_{n\to\infty}
\mE_x^n \left|\int^{t_1}_{t_0} f(\om_s)  \d s\right|^{2} \leq  C \|f\|_{\sC^\mu}^2 |t_1-t_0|^{2\theta},
$$
where $\mu\in (-(\tfrac{\a-1}{2}\wedge \vartheta), \beta]$ and $\theta\in (0, 1+\tfrac{\mu}{\a})$, i.e. $\mP_x\in \sK^\mu(\mD)$. Hence, by Proposition \ref{Pr-Af}, for any $f\in C_b^\infty$, we can define
$$
A^{b\cdot \nabla f}_t(\om):= \lim_{n\to\infty}\int_0^t b_n\cdot \nabla f (\om_s) \d s, \quad \mP_x-a.s..
$$
Next we verify that $\mP_x\in \sM^\mu_{\kappa, b}(x)$ with $\mu\in(-(\tfrac{\a-1}{2}\wedge \vartheta), \beta)$.
Let $\cB^0_t:=\sigma(\{\om_s: \om\in \mD, s\leq t\})$, $\cB_t=\cap_{s>t} \cB^0_s$,  $\cB=\sigma(\cup_{t\in \R_+} \cB_t)$, $\mathrm{D}_{\mP_x}:= \{t>0: \mP_x(\om_t=\om_{t-})<1\}$. For any $s, s_i, t \in \mathrm{D}_{\mP_x}$, $0\leq s_1\leq s_2\leq \cdots\leq s_k\leq s\leq t$, $f\in C_b^\infty$ and $h_1, h_2, \cdots, h_k\in C_b(\R^d)$, denote $H:=\Pi_{i=1}^k h_i(\om_{s_i})\in \cB_s$, then
\be\label{eq-mp}
\begin{aligned}
&|\mE_x [(M^f_t-M^f_s) \Pi_{i=1}^k h_i(\om_{s_i})] \\
\leq &\left| (\mE_x-\mE^n_x) \left[ f(\om_t)-f(\om_s)- \int_s^t(\sL_{\kappa_m}^\a f+b_m\cdot \nabla f) (\om_r) \d r \right] H \right|\\
&+\left| \mE_x \left[ \int_s^t (\sL_{\kappa_m}^\a -\sL_\kappa^\a) f(\om_r) \d r + \int_s^t b_m \cdot \nabla f(\om_r) \d r- (A_t^{b\cdot \nabla f}-A_s^{b\cdot \nabla f})\right] H \right| \\
&+ \left| \mE^n_x \left[ f(\om_t)-f(\om_s)- \int_s^t(\sL_{\kappa_n}^\a f+b_n\cdot \nabla f) (\om_r) \d r \right] H \right|\\
&+ \left| \mE^n_x \left[\int_s^t [(\sL_{\kappa_n}^\a-\sL^\a_{\kappa_m}) f+(b_n-b_m)\cdot \nabla f]  (\om_r) \d r \right] H \right|.
\end{aligned}
\ee
Notice that for any $m$, the first term on the right side of \eqref{eq-mp} goes to $0$ as $n$ goes to 0. Since $\mP_x\in \sK^\mu(\mD)$,  by the definition of $A^{b\cdot\nabla f}_t$, we have
\begin{align*}
&\lim_{m\to\infty}\left| \mE_x \left[ \int_s^t (\sL_{\kappa_m}^\a -\sL_\kappa^\a) f(\om_r) \d r + \int_s^t b_m \cdot \nabla f(\om_r) \d r- (A_t^{b\cdot \nabla f}-A_s^{b\cdot \nabla f})\right] H \right|\\
\leq& \Pi_{i=1}^k\|h_i\|_\infty \lim_{m\to\infty}\left( \left|\mE_x\int_s^t(\sL^\a_{\kappa_m}-\sL^\a_{\kappa})f(\om_r)\d r\right|+ \mE_x\left| \int_s^t b_m \cdot \nabla f(\om_r) \d r- (A_t^{b\cdot \nabla f}-A_s^{b\cdot \nabla f}) \right| \right)\\= &0.
\end{align*}
Similarly, the fourth term goes to $0$ uniformly in $n$ as $m$ goes to $0$. And by definition, the third term on the right side of \eqref{eq-mp} is zero. Thus, letting first $n\to\infty$ and then $m\to \infty$ on the right side of \eqref{eq-mp}, we get
$$
\mE_x [(M^f_t-M^f_s) \Pi_{i=1}^k h_i(\om_{s_i})]=0, \quad \forall s, s_i, t \in \mathrm{D}_{\mP_x}, s_i\leq s\leq t.
$$
By \cite[Lemma 7.7 of Chapter 3]{Et-Ku}, $\mathrm{D}_{\mP_x}$ is at most countable, noticing that $M^f_t$ is c\`adl\`ag under $\mP_x$, we obtain
$$
\mE_x [(M^f_t-M^f_s) \Pi_{i=1}^k h_i(\om_{s_i})]=0, \quad \forall s, s_i, t \in [0,\infty), s_i\leq s\leq t.
$$
\epf

We close this section by giving the definition of weak solution.
\bd[Weak solution]\label{ws}
Let $\beta\in\R$, $\a\in (0,2)$. We say that $(\Om, \cF, \cF_t, \bP, X, N, A)$ is a weak solution to
\be\label{Xsde}
X_t=x+\int_0^t \int_{\R^d} \int_0^{\kappa(X_{s-},z)} z N^{(\a)}(\d r, \d z, \d s)+\int_0^t b(X_{s}) \d s,
\ee
if
\begin{enumerate}
\item $(\Om, \cF, \cF_t, \bP)$ is a complete filtered probability space and $X_t,$ $ A_t$ are c\`adl\`ag processes adapted with $\cF_t$. $N$ is a Poisson random measure and for any compact set $B\subseteq \R_+\times \R^d\backslash \{0\}$,
$ N(B; t)$ is a $\cF_t$ adapted Poisson process with intensity $\int_{\R^d}\int_0^\infty \1_{B}(r,z) \d r\frac{\d z}{|z|^{d+\a}}$;
\item
$$
X_t=x+\int_0^t \int_{\R^d} \int_0^{\infty} z \1_{[0,\kappa(X_{s-},z))}N^{(\a)}(\d r, \d z, \d s)+A_t,
$$
and for any $b_n\in C_b^\infty$ and $b_n\overset{\sC^\beta}{\longrightarrow} b$, we have
$$
\int_0^t b_n(X_s) \d s{\longrightarrow} A_t
$$
in probability ${\bf P}$ uniformly over bounded time intervals;
\item there are constant $\gamma, C >0$ such that
$$
\bE \left|A_{t}-A_{s}\right|^{2} \leqslant C|t-s|^{1+\gamma}, \quad s, t \in[0, T].
$$
\end{enumerate}
\ed

Thanks to the martingale representation theorem for Poisson noise(see II.1.c on p.74 of \cite{Le-Ma}),  following  the argument in \cite[Theorem II$_{10}$]{Le-Ma} and \cite[Proposition 3.13 ]{Zh-Zh1}, we have the equivalence between martingale solution and weak solution without any surprise.

\begin{theorem}\label{Th-MP-Weak}
Let $\mathbb{P}\in\sP(\mathbb{D})$, \begin{enumerate}
\item if $\a\in (0,1]$, $b\in \sC^\beta$ with $\beta>0$, then $\mathbb{P}\in\sM_{\kappa,b}(x)$ if and only if there is a weak solution $(\Om, \cF, \cF_t, \bP, X, N, A)$ so that ${\bf P}\circ X^{-1}=\mathbb{P}$;
\item  if $\a\in (1,2)$,  $b\in \sC^\beta$ with $\beta\leq 0$, then $\mathbb{P}\in\sM_{\kappa,b}^\mu(x)$ for some $\mu<\beta$ if and only if there is a weak solution $(\Om, \cF, \cF_t, \bP, X, N, A)$  so that ${\bf P}\circ X^{-1}=\mathbb{P}\in\sK^{\mu}(\mathbb{D})$.
\end{enumerate}
\end{theorem}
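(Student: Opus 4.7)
The plan is to prove the two implications separately, following the classical equivalence between martingale problems and weak solutions driven by Poisson noise.

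For the ``weak $\Rightarrow$ martingale'' direction, I would start from a weak solution $(\Omega,\cF,\cF_t,\bP,X,N,A)$ and apply It\^o's formula for jump processes (\cite{Ap}, Theorem 4.4.7) to $f(X_t)$ with $f\in C_b^\infty$. In case (1), $A_t=\int_0^t b(X_s)\dif s$ is already a classical Lebesgue integral, and the computation in \eqref{eq-Ito} gives
$$
f(X_t)-f(x)=\int_0^t \sL_{\kappa,b}^\a f(X_s)\dif s+M^f_t,
$$
with $M^f_t$ a martingale, so $\bP\circ X^{-1}\in\sM_{\kappa,b}(x)$. In case (2), I would first use the Burkholder--Davis--Gundy estimate (exactly as in the computation \eqref{eq-moment-b}--\eqref{Eq-Xn-Kry} in the existence proof of Lemma \ref{Le-WP2}) to show $\bP\circ X^{-1}\in\sK^\mu(\mathbb D)$ for some $\mu<\beta$. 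Then for any smooth $b_n\to b$ in $\sC^\mu$, applying It\^o to the SDE with drift $b_n$ (which yields an ordinary Lebesgue integral $\int_0^tb_n\cdot\nabla f(X_s)\,ds$), passing to the limit using Proposition \ref{Pr-Af}, and invoking the assumption that $\int_0^t b_n(X_s)\dif s\to A_t$, I would identify $A^{b\cdot\nabla f}_t$ and verify that $M^f_t$ defined by \eqref{Eq-MP2} is a martingale.

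For the ``martingale $\Rightarrow$ weak'' direction, let $\mathbb P\in\sM_{\kappa,b}^{(\mu)}(x)$ and consider the canonical process $(\om_t)$ on $\mathbb D$. The jump measure of $\om$ is a random measure $\mu_\om(\dif s,\dif z)$ on $\mathbb R_+\times\mathbb R^d$. Taking $f(y)=g(y)h(y-\om_{s-})$-type test functions and exploiting \eqref{Eq-MP1}/\eqref{Eq-MP2}, one identifies the predictable compensator of $\mu_\om$ under $\mathbb P$ as $\kappa(\om_{s-},z)|z|^{-d-\a}\,\dif z\,\dif s$. Then, by the martingale representation theorem for random measures (II.1.c p.74 of \cite{Le-Ma}; see the argument in Theorem II$_{10}$ of \cite{Le-Ma} and Proposition 3.13 of \cite{Zh-Zh1}), after a possible enlargement of the probability space by an independent Poisson random measure, one can construct a Poisson random measure $N$ on $\mathbb R_+\times\mathbb R^d\times\mathbb R_+$ with intensity $\dif r\,|z|^{-d-\a}\dif z\,\dif s$ such that
$$
\mu_\om(\dif s,\dif z)=\int_0^\infty \1_{[0,\kappa(\om_{s-},z))}(r)\,N(\dif r,\dif z,\dif s).
$$
This converts the stochastic integral part of \eqref{Xsde} into a genuine integral against $N^{(\a)}$. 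For the drift part, in case (1) the difference $\om_t-x-\int_0^t\int\int z\1_{[0,\kappa)}N^{(\a)}$ coincides with $\int_0^t b(\om_s)\dif s$ since $b$ is continuous. In case (2) the same difference defines a continuous adapted process $A_t$; the requirement (3) of Definition \ref{ws} is \eqref{Kr0} from Proposition \ref{Pr-Af}, and the convergence $\int_0^t b_n(\om_s)\dif s\to A_t$ in probability (uniform on compacts) follows from \eqref{NL3}.

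The main obstacle is the compensator identification together with the Poisson representation in case (2): verifying that the compensator of $\mu_\om$ is exactly $\kappa(\om_{s-},z)|z|^{-d-\a}\dif z\,\dif s$ requires that \eqref{Eq-MP2} be applied to sufficiently many $f$ (taking a dense family with $\sL_\kappa^\a f$ well-defined and $b\cdot\nabla f$ handled through $A^{b\cdot\nabla f}$). Once the compensator is pinned down, producing the auxiliary uniform mark $r$ that \emph{thins} $N$ by $\1_{[0,\kappa)}$ is the standard enlargement trick of Lepeltier--Marchal, and requires no new ideas beyond the references. All remaining steps are bookkeeping, so, as the text already states, the theorem follows ``without any surprise.''
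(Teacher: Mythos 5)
Your plan follows the same route the paper takes: the paper itself does not spell out a full proof of Theorem \ref{Th-MP-Weak}, but instead invokes the martingale representation theorem for Poisson random measures (\cite{Le-Ma}, II.1.c and Theorem II$_{10}$; \cite{Zh-Zh1}, Proposition 3.13), and the sketch you give — It\^o's formula in one direction, compensator identification plus the Lepeltier--Marchal thinning construction in the other — is exactly the argument those references carry out. The use of Proposition \ref{Pr-Af} and Definition \ref{ws}(3) to transfer the zero-energy drift between the two formulations is also the intended mechanism.

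One step in your outline should be cut, because it is both unnecessary and would not go through as described. In case (2) of the ``weak $\Rightarrow$ martingale'' direction you propose to ``first use BDG (exactly as in \eqref{eq-moment-b}--\eqref{Eq-Xn-Kry}) to show $\bP\circ X^{-1}\in\sK^\mu(\mathbb D)$.'' But the theorem's hypothesis already postulates $\bP\circ X^{-1}=\mathbb P\in\sK^\mu(\mathbb D)$, so there is nothing to prove here. More to the point, the computation \eqref{eq-moment-b}--\eqref{Eq-Xn-Kry} applies It\^o to the resolvent $u_n^\lambda$ of the full generator $\sL^\alpha_{\kappa_n,b_n}$ along the approximating diffusions $X^n$; for a given abstract weak solution $X$ with distributional $b$, that It\^o step is not available until you already know $X$ solves the martingale problem (which is the conclusion, not the hypothesis). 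So drop that step and start directly from the assumed Krylov estimate. The remaining points — applying It\^o for Dirichlet processes (equivalently, via the Young-type integral construction underlying Proposition \ref{Pr-Af}) to identify $\int_0^t\nabla f(X_s)\cdot\d A_s$ with $A^{b\cdot\nabla f}_t$, and in the converse direction identifying the jump compensator and producing $N$ by the thinning/enlargement device — are precisely the bookkeeping that the paper delegates to the cited references, and your account of the ``main obstacle'' (testing \eqref{Eq-MP2} against enough $f$) is the right thing to be careful about.
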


\section{Regularity of density of martingale solution}
Thanks to Theorem \ref{Th-MP-Weak}, it is equivalent to consider the weak solution of \eqref{Xsde} and martingale solution associated with $\sL^\a_{\kappa,b}$. We are going to prove that the law of the weak solution  of \eqref{Xsde} has a density in some Besov space under some mild assumptions. Most results in this section are inspired by Debussche and Fournier's work \cite{De-Fo}. 

\medskip

Through out this section, we assume $\nu$ satisfies the following assumption for some $\alpha\in(0,2)$:
\ba\label{Aspt3}
\begin{itemize}
\item[(i)] $\int_{|z|\geq 1}|z|^p\nu(\d z)<\infty,\quad \forall p\in [0,\a)$,
\item[(ii)] there exists $C>0$ such that $\int_{|z|\leq a}|z|^2\nu(\d z)\leq Ca^{2-\alpha}, \quad \forall a\in(0,1]$,
\item[(iii)] there exists $c>0$ such that $\int_{|z|\leq a} |\<\xi, z\>|^2 \nu(\d z) \geq c a^{2-\a},  \  \forall \xi\in\mS^{d-1}, a\in (0,1]$. 
\end{itemize}
\ea
Define
\begin{align*}
N^{(\a)}(\d r, \d z, \d s):=\left\{
\begin{aligned}
&N(\d r, \d z, \d s) &\quad \a\in(0,1)\\
&N(\d r, \d z, \d s)-\d r \1_{B_1}(z) \nu(\d z) \d s &\quad \a=1\\
&N(\d r, \d z, \d s)-\d r\nu(\d z)\d s &\quad \a\in(1,2),
\end{aligned}
\right.
\end{align*}
where $N$ is a a Poisson random measure on $\R_+\times \R^d\times \R_+$ with intensity measure $\d r\nu(\d z)\d s$.  We also assume $Y_t$ solves the following equation:
\be\label{Eq-SDEy}
Y_t=Y_0+\int_0^t a(Y_s)\d s+\int_0^t\int_{\R^d}\int_0^\infty g(Y_{s-},z)\1_{[0,k(Y_{s-},z)]}(r) N^{(\a)}(\d r, \d z, \d s),
\ee
where $a, g, k$ are bounded measurable  functions.

\begin{lemma}\label{Le-Yst}
Assume
$$
|a(y)|\leq c_0, |g(y,z)|\leq c_2|z|, |k(y,z)|\leq \lambda_2
$$
and $Y$ solves \eqref{Eq-SDEy}. Then for all $p\in(0,\alpha)$  and $0\leq s\leq t\leq s+1$ we have
\begin{align}\label{Eq-Yst}
\bE \sup_{v\in [s,t]}|Y_{v}-Y_s|^p + \bE \sup_{v\in [s,t]}|Y_{v-}-Y_s|^p \leq C(p, c_0, c_2, \lambda_2) |t-s|^{\frac{p}{\a\vee  1}}.
\end{align}
Furthermore, if $\alpha\in(0,1)$ and $p\in[\a,1)$, then for all $0\leq s\leq t\leq s+1$ we have
\begin{align}\label{Eq-Yst11}
\bE \left(\sup_{v\in [s,t]}|Y_{v}-Y_s|^p\wedge 1\right) \leq C(p, c_0, c_2, \lambda_2) |t-s|^{p}.
\end{align}

\end{lemma}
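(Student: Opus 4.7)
Fix $0 \le s \le t \le s+1$ and set the truncation level $a := (t-s)^{1/\alpha} \wedge 1$. The plan is to decompose the increment
\[
Y_t - Y_s = D_{s,t} + M^{a}_{s,t} + R^{a}_{s,t},
\]
where $D_{s,t} = \int_s^t a(Y_u)\,\d u$ is the drift, $M^{a}_{s,t}$ collects the (always compensated) small jumps with $|z|\le a$, and $R^{a}_{s,t}$ collects the jumps with $|z|>a$, compensated or not according to the value of $\alpha$ in the definition of $N^{(\alpha)}$. Then each piece is estimated separately and the threshold $a=(t-s)^{1/\alpha}$ is used to balance the small-jump and large-jump contributions.

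\textbf{Small jumps.} For $M^a$, Doob's inequality combined with the $L^2$ isometry for compensated Poisson integrals yields
\[
\bE \sup_{v\in[s,t]}|M^{a}_{s,v}|^2 \le C\,\bE\,[M^a]_t \le C c_2^2\lambda_2 (t-s)\!\int_{|z|\le a}\!|z|^2\,\nu(\d z) \le C(t-s)\,a^{2-\alpha},
\]
where the last inequality uses assumption \eqref{Aspt3}(ii). Raising to the $p/2$-power (permitted since $p<\alpha<2$) gives the small-jump contribution $C(t-s)^{p/2}a^{p(2-\alpha)/2}$. With $a=(t-s)^{1/\alpha}$ this equals $C(t-s)^{p/\alpha}$.

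\textbf{Large jumps.} For the large-jump contribution the plan splits by cases. When $\alpha\in(1,2)$, $R^a$ is a compensated martingale and one applies BDG together with the elementary bound $[R^a]_t^{p/2}=(\sum |\Delta R^a_u|^2)^{p/2}\le \sum|\Delta R^a_u|^p$, valid because $p/2\le 1$; this converts the quadratic-variation estimate into an $L^p$-type estimate
\[
\bE\sup_{v\in[s,t]}|R^{a}_{s,v}|^p \le C(t-s)\!\int_{|z|>a}\!|z|^p\,\nu(\d z) \le C(t-s)\,a^{p-\alpha},
\]
using a dyadic decomposition combined with \eqref{Aspt3}(ii) to bound $\nu(\{2^k a\le |z|\le 2^{k+1}a\}) \lesssim (2^ka)^{-\alpha}$ on the annuli up to $|z|=1$, and \eqref{Aspt3}(i) for $|z|>1$. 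When $\alpha\in(0,1)$ the jumps are uncompensated, so one simply uses subadditivity $|\sum\Delta|^p\le \sum|\Delta|^p$ (since $p\le 1$) to obtain the same bound; when $\alpha=1$, the medium jumps $a<|z|\le 1$ are compensated and treated as in the first case, while $|z|>1$ is handled as in the second. Substituting $a=(t-s)^{1/\alpha}$ again yields $C(t-s)^{p/\alpha}$. Combined with the trivial drift bound $|D_{s,t}|^p\le c_0^p(t-s)^p$, and using $(t-s)^p\le (t-s)^{p/(\alpha\vee 1)}$ together with $(t-s)^{p/\alpha}\le (t-s)^{p/(\alpha\vee 1)}$ for $t-s\le 1$, this proves \eqref{Eq-Yst}. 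Extending to $\bE\sup_{v\in[s,t]}|Y_{v-}-Y_s|^p$ is immediate because the left limit differs from the value at only countably many instants.

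\textbf{The case $\alpha\in(0,1)$, $p\in[\alpha,1)$ and $\wedge 1$.} Here the moment $\int_{|z|>1}|z|^p\nu(\d z)$ may diverge, so one cannot directly bound $\bE|R^{a}|^p$. The plan is to use the truncation $\cdot\wedge 1$ and isolate the event $E_{s,t}$ that at least one jump with $|z|>1$ occurs in $[s,t]$. Since the number of such jumps is Poisson with parameter $\lambda_2\,\nu(|z|>1)(t-s)\lesssim (t-s)$, one has $\bP(E_{s,t})\le C(t-s)$, which already gives the term $(t-s)$ on the event $E_{s,t}$ after truncating by $1$. On the complementary event, only jumps with $|z|\le 1$ occur, and the argument above with $a=(t-s)^{1/\alpha}\le (t-s)$ applies to give a bound of the same order, producing $C(t-s)^{p/\alpha}\le C(t-s)^p$ since $p\ge \alpha$. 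Combined with the drift bound $C(t-s)^p$, this proves \eqref{Eq-Yst11}.

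\textbf{Main obstacle.} The delicate step is the martingale case $\alpha\in(1,2)$ with $p\in(1,\alpha)$: here BDG in $L^2$ on the full range $|z|>a$ is useless because $\int_{|z|>1}|z|^2\nu(\d z)$ diverges, and subadditivity $|\sum \Delta|^p\le \sum|\Delta|^p$ fails for $p>1$. The remedy is precisely the identity $(\sum x_i^2)^{p/2}\le \sum x_i^p$ (valid for $0<p\le 2$ and $x_i\ge 0$), applied pointwise to the jumps of the quadratic variation, which effectively converts BDG into an $L^p$ jump estimate and brings the tail integrand back to the integrable regime $p<\alpha$.
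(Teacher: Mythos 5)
Your proof of \eqref{Eq-Yst} follows essentially the paper's route: split the jumps at the threshold $a=(t-s)^{1/\alpha}$, treat the small-jump part via Doob/BDG in $L^2$ followed by Jensen (the paper phrases this as BDG at power $p/2<1$ plus Jensen, which is the same estimate), and convert the large-jump martingale bound into an $L^p$ jump estimate via the $\ell^2\hookrightarrow\ell^p$ inequality $\bigl(\sum x_i^2\bigr)^{p/2}\le\sum x_i^p$ applied to the atoms of the quadratic variation. The paper carries this out by passing to the Poisson random measure $N_{s,t}$ and invoking Lemma A.1 of \cite{De-Fo}, but the content is identical, and your identification of the ``main obstacle'' for $p\in(1,\alpha)$ is exactly what that step resolves.

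Where you genuinely diverge is in the proof of \eqref{Eq-Yst11}. The paper's argument is purely algebraic: for $\alpha\in(0,1)$ and $p\in[\alpha,1)$ it uses the elementary bound $x^p\wedge 1\le x^{\alpha p}$ (since $\alpha p<p$), which together with subadditivity reduces the truncated increment to the untruncated one at the strictly smaller exponent $\alpha p<\alpha$, and then reuses the already-established inequality \eqref{Eq-st1} to get $(t-s)^{\alpha p/\alpha}=(t-s)^p$. You instead condition on the event $E_{s,t}$ of at least one jump with $|z|>1$, bound $\bP(E_{s,t})\lesssim (t-s)\le (t-s)^p$ using {\bf Assumption} \ref{Aspt3}(i) with $p=0$, and redo the estimate with only $|z|\le 1$ jumps on $E_{s,t}^c$. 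This works, but it requires slightly more care than you indicate: at the endpoint $p=\alpha$, the tail integral $\int_{a<|z|\le 1}|z|^p\,\nu(\d z)$ picks up a logarithmic divergence $\log(1/a)$, and one must observe that $(t-s)\log\frac{1}{t-s}\le C(t-s)^\alpha$ for $t-s\le 1$ (since $\alpha<1$) to close the bound; for $p>\alpha$ the integral is a finite constant and $(t-s)\le (t-s)^p$ suffices. With that point made explicit, your conditioning argument is a valid alternative. The paper's version is shorter because it simply bootstraps the already-proved $L^{\alpha p}$ estimate, whereas your version is a bit more probabilistically transparent about \emph{why} the truncation by $1$ helps (it absorbs the rare big-jump event), at the cost of having to re-run the small-jump analysis once more.
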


\begin{proof}
 For all $0<p<\alpha$ and $0\leq s\leq t\leq s+1$, we have: if $\alpha\in(1,2)$
 \begin{align}\label{b1}
 \bE \left|\int_s^t a(Y_u)\d u\right|^p\leq C|t-s|^p\leq C|t-s|^{\frac{p}{\alpha}},
 \end{align}
 and if $\alpha\in(0,1]$
 \begin{align}\label{b2}
 \bE\left|\int_s^t a(Y_u)\d u\right|^p\leq C|t-s|^p.
 \end{align}
Then the inequality \eqref{Eq-Yst} is a simple consequence of \eqref{b1} and  \eqref{b2} and the following inequality:
\begin{align}\label{Eq-st1}
 &\bE\left[\sup_{v\in [s,t]}\left|\int_s^{v}\int_{\R^d}\int_0^\infty \1_{[0,k(Y_{u-},z)]}(r)g(Y_{u-},z) N^{(\a)}(\d r,\d z,\d u)\right|^p\right]\nonumber\\
\leq& C(p, c_2, \lambda_2) |t-s|^{p/\alpha},
\end{align}
 for all $p\in(0,\alpha)$ and $0\leq s\leq t\leq s+1$. Actually, if $\a\in (1,2)$, write
\begin{align*}
&\int_s^{v}\int_{\R^d}\int_0^\infty \1_{[0,k(Y_{u-},z)]}(r)g(Y_{u-},z) N^{(\a)}(\d r,\d z;\d u)\\
=&I_1(v)+I_2(v):=\int_s^{v}\int_{|z|\leq |t-s|^{1/\a}}\int_0^{k(Y_{u-},z)} g(Y_{u-},z) N^{(\a)}(\d r,\d z,\d u)\\
&+ \int_s^{v}\int_{|z|>|t-s|^{1/\a}}\int_0^{k(Y_{u-},z)}g(Y_{u-},z) N^{(\a)}(\d r,\d z,\d u),
\end{align*}
For $I_1$, notice that $\frac{p}{2}<1$, by Burkholder-Davis-Gundy's inequality,
\be\label{eq-pmoment1}
\begin{aligned}
\bE\left[\sup_{v\in [s,t]}|I_1(v)|^p\right]\leq & C_p \bE\left[\left|\int_s^t\int_{|z|\leq |t-s|^{1/\alpha}}\int_0^{k(Y_{u-},z)} |g(Y_{u-},z)|^2 N(\d r,\d z;\d u)\right|^{\frac{p}{2}}\right]\\
\leq &C_p \left[ \bE \int_s^t\int_{|z|\leq |t-s|^{1/\alpha}}\int_0^{k(Y_{u-},z)} |g(Y_{u-},z)|^2 N(\d r,\d z;\d u) \right]^{\frac{p}{2}}\\
\leq & C_p\lambda_2 \left[ c_2^2 |t-s| \int_{|z|\leq |t-s|^{1/\a}}|z|^2 \nu(\d z) \right]^{\frac{p}{2}}\leq C(p, c_2, \lambda_2) |t-s|^{p/\a}.
\end{aligned}
\ee
For $I_2$, similarly, we have
\begin{align*}c_2
\bE\left[\sup_{v\in [s,t]}|I_2(v)|^p\right]\leq & C_p \bE\left[\left|\int_s^t\int_{|z|> |t-s|^{1/\alpha}}\int_0^{k(Y_{u-},z)} |g(Y_{u-},z)|^2 N(\d r,\d z;\d u)\right|^{\frac{p}{2}}\right]\\
\leq &C_pc_2^2\bE \left[ \left| \int_s^t\int_{|z|> |t-s|^{1/\alpha}} \int_0^{\lambda_2} |z|^2 N(\d r,\d z;\d u)  \right|^{\frac{p}{2}}\right].
\end{align*}
Let $N_{s,t}(\d z)= \int_s^t \int_0^{\lambda_2} N(\d r, \d z; \d u)$, then $N_{s,t}$ is a Poisson random measure with intensity $\lambda_2|t-s|\nu(\d z)$. Notice that $N_{s,t}$ is a counting measure,  by the elementary inequality: $(\sum_{k} |a_k|^p )^{1/p}\leq (\sum_{k} |a_k|^q )^{1/q}, \ \forall\, p\geq q>0,\{a_k\}\subset \R$ and  Lemma A.1 of \cite{De-Fo}, we obtain
\be\label{eq-pmoment2}
\begin{aligned}
\bE\left[\sup_{v\in [s,t]}|I_2(v)|^p\right]\leq& C_pc_2^p \bE \left[ \left| \int_{|z|> |t-s|^{1/\alpha}}|z|^2 N_{s,t}(\d z)  \right|^{\frac{p}{2}}\right] \\
\leq& C_pc_2^p\bE \int_{|z|> |t-s|^{1/\alpha}}|z|^p N_{s,t}(\d z) \\
\leq & C_pc_2^p \lambda_2 |t-s| \int_{|z|>|t-s|^{1/\a}} |z|^p \nu(\d z)\leq C(p,c_2,\lambda_2) |t-s|^{p/\a}.
\end{aligned}
\ee
Combining \eqref{eq-pmoment1} and \eqref{eq-pmoment2}, we get the desired result for $\alpha\in(1,2)$. 
By the similar argument we get that for $0<p<\a\leq1$ and $0\leq s\leq t\leq s+1$
\begin{align*}
 &\bE\left[\sup_{v\in [s,t]}\left|\int_s^{v}\int_{\R^d}\int_0^\infty \1_{[0,k(Y_{u-},z)]}(r)g(Y_{u-},z) N^{(\a)}(\d r,\d z,\d u)\right|^p\right]\nonumber\\
\leq& C(p, c_2, \lambda_2) |t-s|^{p/\alpha}.
\end{align*}
Now we only need to show that for $p\in[\alpha,1)$ and $0\leq s\leq t\leq s+1$, \eqref{Eq-Yst11} holds.
Since $\alpha p<p$, we have
\be
\label{Yp1}
\begin{aligned}
&|Y_t-Y_s|^p\wedge1\\
\leq& \left|\int_s^ta(Y_u)du\right|^p +\left|\int_s^t\int_{\R^d}\int_0^\infty \1_{[0,k(Y_{u-},z)]}(r)g(Y_{u-},z) N^{(\a)}(\d r,\d z,\d u)\right|^p\wedge1 \\
\leq&C |t-s|^p+\left|\int_s^t\int_{\R^d}\int_0^\infty \1_{[0,k(Y_{u-},z)]}(r)g(Y_{u-},z) N^{(\a)}(\d r,\d z,\d u)\right|^{\alpha p}. 
\end{aligned}
\ee
By \eqref{Eq-st1}, we get \eqref{Eq-Yst11}.
\end{proof}
\begin{lemma}\label{Le-densityY}
Suppose $\theta_i\in (0,1), i=1,2,3$ and $c_j>0, j=0,1,2,3$,
\be\label{Eq-codt-agk}
\begin{aligned}
|a(y)|\leq c_0,&\quad |a(y_1)-a(y_2)| \leq c_1 |y_1-y_2|^{\theta_1}, \\
|g(y,z)|\leq c_2|z|,&\quad |g(y_1, z)-g(y_2,z)|\leq c_3|y_1-y_2|^{\theta_2} |z|,
\end{aligned}
\ee
$k$ satisfies \eqref{AS2}, \eqref{AS3} with  $\Lambda_i$ and $\vartheta$ replaced by $\lambda_i$ and $\theta_3$, respectively.
For any $\epsilon\in(0,t\wedge1)$, we can find a $\mathcal{F}_{t-\epsilon}$-measurable variable $V_t^\epsilon$ such that for all $p\in(0,\alpha)$
   \begin{align}\label{Eq-dif-Y}
   \bE|Y_t-Y_t^\epsilon|^p\leq C\epsilon^{\theta_0 p},
   \end{align}
   where
$$
Y^\epsilon_t= V_t^\epsilon+ \int_{t-\epsilon}^t \int_{\R^d} \int_{0}^\infty \1_{[0, k(Y_{t-\epsilon},z)]} (r) g(Y_{t-\epsilon}, z)N^{\a} (\d r, \d z, \d s),
$$
and if $\alpha\in[1,2)$, 
$$\theta_0=\frac{1}{\a}\big[(\a+\theta_1)\wedge (1+\theta_2)\wedge (1+\tfrac{\theta_3}{\a})\big],$$
 if $\alpha\in(0,1)$
$$\theta_0= \frac{1}{1-\theta_1} \wedge \frac{1}{\a}\big[(\a+\theta_1)\wedge (1+\theta_2)\wedge (1+\theta_3)\big],$$ 
   \end{lemma}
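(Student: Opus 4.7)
Set $V_t^\epsilon := Y_{t-\epsilon}+\epsilon\, a(Y_{t-\epsilon})$, which is $\cF_{t-\epsilon}$-measurable. Subtracting the SDE \eqref{Eq-SDEy} from the definition of $Y_t^\epsilon$ gives $Y_t-Y_t^\epsilon = I_1+I_2$ with
$$I_1 := \int_{t-\epsilon}^t[a(Y_s)-a(Y_{t-\epsilon})]\,\d s, \qquad I_2 := \int_{t-\epsilon}^t\!\int_{\R^d}\!\int_0^\infty \Phi(s,z,r)\,N^{(\a)}(\d r,\d z,\d s),$$
where $\Phi = \Phi_1+\Phi_2$, with
$$\Phi_1 := \1_{[0,k(Y_{s-},z)]}(r)[g(Y_{s-},z)-g(Y_{t-\epsilon},z)], \quad \Phi_2 := [\1_{[0,k(Y_{s-},z)]}(r)-\1_{[0,k(Y_{t-\epsilon},z)]}(r)]g(Y_{t-\epsilon},z),$$
so that the Hölder-$\theta_2$ regularity of $g$ and the Hölder-$\theta_3$ regularity of $k$ contribute separately. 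I then bound $\bE|I_1|^p$ and $\bE|I_2|^p$ by following the moment-splitting scheme of Lemma \ref{Le-Yst}.

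\textbf{Drift term.} For $I_1$ I use $|a(Y_s)-a(Y_{t-\epsilon})|\leq c_1|Y_s-Y_{t-\epsilon}|^{\theta_1}$, combined with Jensen's inequality (if $p\leq 1$) or Hölder's inequality (if $p\geq 1$) in time, and the moment bound of Lemma \ref{Le-Yst} applied to the exponent $p\theta_1<p<\a$; this yields $\bE|I_1|^p\leq C\epsilon^{p(1+\theta_1/(\a\vee 1))}$, which matches the $(\a+\theta_1)/\a$ contribution to $\theta_0$ when $\a\geq 1$.

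\textbf{Jump term.} For $I_2$, the pointwise estimates
$$|\Phi_1|\leq c_3\1_{[0,\lambda_2]}(r)|Y_{s-}-Y_{t-\epsilon}|^{\theta_2}|z|, \qquad |\Phi_2|\leq c_2\1_{A(s,z)}(r)|z|$$
with $|A(s,z)|\leq\lambda_3|Y_{s-}-Y_{t-\epsilon}|^{\theta_3}$ reduce the estimate of $\bE|I_2|^p$ to two Poisson integrals. Following the small/large jump strategy of Lemma \ref{Le-Yst}, I split the $z$-integral at the natural threshold $|z|=\epsilon^{1/\a}$ and apply Burkholder-Davis-Gundy (for $\a\in[1,2)$, where $N^{(\a)}$ is a martingale measure) or the subadditivity of the counting measure (for $\a\in(0,1)$, where $N^{(\a)}=N$), then use the elementary inequality $(\sum a_k^2)^{p/2}\leq\sum|a_k|^p$ for $p\in(0,2]$ to pass to a bound of the form $\bE|I_2|^p\leq C\int\!\int\!\int\bE|\Phi|^p\,\d r\,\nu(\d z)\,\d s$. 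Inserting $\int_{|z|\leq a}|z|^2\nu(\d z)\leq Ca^{2-\a}$ and $\int_{|z|>1}|z|^p\nu(\d z)<\infty$ for $p<\a$ from assumption \ref{Aspt3}, together with the $p\theta_i$-moment bound from Lemma \ref{Le-Yst}, and optimizing the threshold, produces the $(1+\theta_2)/\a$ and $(1+\theta_3)/\a$ (respectively $(1+\theta_3/\a)/\a$ in the supercritical regime) contributions to $\theta_0$.

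\textbf{Main difficulty.} The chief obstacle is the careful coordination of several limiting exponents: the three Hölder regularities $\theta_1,\theta_2,\theta_3$, the two regimes $\a<1$ vs.\ $\a\geq 1$ (which flip both the factor $1/(\a\vee 1)$ in Lemma \ref{Le-Yst} and the compensation structure of $N^{(\a)}$), and the split threshold $|z|=\epsilon^{1/\a}$, which accounts for the global $1/\a$ factor in $\theta_0$. The anomalous term $\frac{1}{1-\theta_1}$ that appears only for $\a\in(0,1)$ is more delicate: it seems to require reinserting $|Y_s-Y_{t-\epsilon}|\leq c_0\epsilon+|\mathrm{jumps}|$ into the Hölder estimate for $a$ and closing a self-consistent inequality, rather than following from a direct application of Lemma \ref{Le-Yst}. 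Combining all of the above yields $\bE|Y_t-Y_t^\epsilon|^p\leq C\epsilon^{p\theta_0}$.
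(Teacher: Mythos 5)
Your overall decomposition — $V_t^\epsilon := Y_{t-\epsilon} + \epsilon\,a(Y_{t-\epsilon})$, then $Y_t - Y_t^\epsilon = I_1 + I_2$ with $I_2$ split into a $g$-difference term and a $k$-difference term, then small/large jump split and Burkholder-Davis-Gundy — matches the paper's proof for $\alpha\in[1,2)$, and the drift computation giving $\epsilon^{p(1+\theta_1/\alpha)}$ is correct there. One small remark: you write the split threshold for $I_2$ as $|z|=\epsilon^{1/\alpha}$; that is correct for the $g$-difference part $\Phi_1$, but for the $k$-difference part $\Phi_2$ the balanced threshold is $|z|=\epsilon^{(1+\theta_3/\alpha)/\alpha}$ (resp.\ $|z|=\epsilon^{(1+\theta_3)/\alpha}$ when $\alpha<1$); with $\epsilon^{1/\alpha}$ the small-jump piece only yields exponent $\tfrac{p}{\alpha}(1+\theta_3/2)$, which is too weak. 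Your phrase ``and optimizing the threshold'' presumably covers this, but make it explicit.

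The genuine gap is the case $\alpha\in(0,1)$. With $V_t^\epsilon=Y_{t-\epsilon}+\epsilon\,a(Y_{t-\epsilon})$, the drift term $I_1=\int_{t-\epsilon}^t[a(Y_s)-a(Y_{t-\epsilon})]\,\d s$ is controlled via $|Y_s-Y_{t-\epsilon}|^{\theta_1}$ and Lemma \ref{Le-Yst}, which only produces $\bE|I_1|^p\lesssim \epsilon^{p(1+\theta_1)}$. That is strictly worse than the required $\epsilon^{p/(1-\theta_1)}\wedge\epsilon^{p(1+\theta_1/\alpha)}$, since $\tfrac{1}{1-\theta_1}>1+\theta_1$ and $1+\theta_1/\alpha>1+\theta_1$ for $\alpha<1$. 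Your suggestion to ``reinsert $|Y_s-Y_{t-\epsilon}|\leq c_0\epsilon+|\mathrm{jumps}|$ and close a self-consistent inequality'' does not actually produce a self-consistent structure: the quantity $|Y_s-Y_{t-\epsilon}|$ never recurs on the left-hand side. What the paper does instead is to \emph{change the definition of $V_t^\epsilon$}: it builds a whole $\cF_{t-\epsilon}$-measurable path $(V_u^\epsilon)_{u\in[t-\epsilon,t]}$ as the Euler approximation of the ODE $\dot V=a(V)$ started at $Y_{t-\epsilon}$ with mesh $\delta=\epsilon^{1/(1-\theta_1)}$, and compares $Y_s$ to $V_s^\epsilon$ rather than to $Y_{t-\epsilon}$. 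The bound $|Y_u-V_u^\epsilon|\leq c_1\int_{t-\epsilon}^u|Y_s-V_s^\epsilon|^{\theta_1}\d s + C\epsilon\delta^{\theta_1}+R_{t,\epsilon}$ then closes on $S_{t,\epsilon}:=\sup_{s}|Y_s-V_s^\epsilon|$, giving $S_{t,\epsilon}\lesssim R_{t,\epsilon}+\epsilon^{1/(1-\theta_1)}$, which is the source of the $\tfrac{1}{1-\theta_1}$ exponent. Without this change of $V_t^\epsilon$, your argument cannot reach the stated $\theta_0$ for $\alpha<1$.
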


\begin{proof}
  We first prove the case when $\alpha\in(1,2)$. Take
 \begin{align*}
 V_t^\epsilon:=Y_{t-\epsilon}+\epsilon a(Y_{t-\epsilon}),
\end{align*}
then
\begin{align*}
Y_t-Y_t^\epsilon=& I_{t,\epsilon}+ J_{t,\epsilon}:=\int_{t-\epsilon}^t[a(Y_s)-a(Y_{t-\epsilon})]\d s\\
&+\int_{t-\epsilon}^t\int_{\R^d}\int_0^\infty \Big[g(Y_{s-},z)\1_{[0,k(Y_{s-},z)]}(r)-g(Y_{t-\epsilon},z)\1_{[0,k(Y_{t-\epsilon},z)]}(r)\Big]N^{\a} (\d r, \d z, \d s).
\end{align*}
For all $p\in (0,1]$,  by Jensen's inequality,
\begin{align*}
\bE [|I_{t,\epsilon} |^p] \leq \|a\|_{C^{\theta_1}}^p \bE \left( \int_{t-\epsilon}^t |Y_s-Y_{t-\epsilon}|^{\theta_1} \d s\right)^p\\
\leq \|a\|_{C^{\theta_1}}^p \left(\int_{t-\epsilon}^t  \bE   |Y_s-Y_{t-\epsilon}|^{\theta_1} \d s\right)^p\overset{\eqref{Eq-Yst}}\leq C \epsilon^{p(1+\frac{\theta_1}{\a})}.
\end{align*}
If $p\in (1, \a)$, by H\"older's inequality,
\begin{align*}
\bE [|I_{t,\epsilon} |^p] \leq \|a\|_{C^{\theta_1}}^p \bE \left[\left| \int_{t-\epsilon}^t |Y_s-Y_{t-\epsilon}|^{\theta_1} \d s\right|^p\right]\\
\leq \|a\|_{C^{\theta_1}}^p \epsilon^{p-1}\bE   \int_{t-\epsilon}^t   |Y_s-Y_{t-\epsilon}|^{p\theta_1} \d s \overset{\eqref{Eq-Yst}}\leq C \epsilon^{p(1+\frac{\theta_1}{\a})}.
\end{align*}
To sum up, for each $p\in (0,\a)$,
\be
\bE [|I_{t,\epsilon}|^p]\leq C \epsilon^{p(1+\frac{\theta_1}{\a})}.
\ee
For $J_{t,\epsilon}$,
\be \label{J}
\begin{aligned}
J_{t,\epsilon}=& \int_{t-\epsilon}^t\int_{\R^d}\int_0^{k(Y_{t-\epsilon},z)} \big[g(Y_{s-},z)-g(Y_{t-\epsilon},z) \big]N^{\a} (\d r, \d z, \d s)\\
&+ \int_{t-\epsilon}^t \int_{\R^d} \int_{k(Y_{t-\epsilon},z)}^{k(Y_{s-},z)} g(Y_{s-}, z) N^{\a} (\d r, \d z, \d s)=: J^1_{t,\epsilon}+J^2_{t,\epsilon},
\end{aligned}
\ee
where we abuse the notation $\int_u^v=-\int_v^u$ when $u>v$.  Notice that $p\in (0,\a)$, like the proof of Lemma \ref{Le-Yst}, one can see that
\begin{align*}
\bE [|J_{t,\epsilon}^1|^p]\leq &C \bE \left[\left| \int_{t-\epsilon}^t \int_{\R^d} \int_0^{\lambda_2}\big|g(Y_{s-},z)-g(Y_{t-\epsilon},z) \big|^2 N(\d r, \d z, \d s)\right|^{\frac{p}{2}}\right]\\
\leq &C\bE \left[\left| \int_{t-\epsilon}^t \int_{\R^d} \int_0^{\lambda_2}\big| Y_{s-}-Y_{t-\epsilon}\big|^{2\theta_2} |z|^2N(\d r, \d z, \d s)\right|^{\frac{p}{2}}\right]\\
\leq &C\bE  \left[ \left| \int_{t-\epsilon}^t \int_{|z|\leq \epsilon^{\frac{1}{\a}}} \int_0^{\lambda_2}\big| Y_{s-}-Y_{t-\epsilon}\big|^{2\theta_2} |z|^2N(\d r, \d z, \d s) \right|^{\frac{p}{2}}\right]\\
&+ C\bE  \left[ \left| \int_{t-\epsilon}^t \int_{|z|> \epsilon^{\frac{1}{\a}}} \int_0^{\lambda_2}\big| Y_{s-}-Y_{t-\epsilon}\big|^{2\theta_2} |z|^2N(\d r, \d z, \d s) \right|^{\frac{p}{2}}\right]\\
\leq &C \bE \left[ \sup_{s\in [t-\epsilon, t]} |Y_{s-}-Y_{t-\epsilon}|^{p\theta_2} \left(\int_{t-\epsilon}^t \int_{|z|\leq \epsilon^{\frac{1}{\a}}} \int_0^{\lambda_2}  |z|^2N(\d r, \d z, \d s) \right)^{\frac{p}{2}} \right]\\
&+ C \bE \left( \int_{t-\epsilon}^t \int_{|z|>\epsilon^{\frac{1}{\a}}} \int_0^{\lambda_2}\big| Y_{s-}-Y_{t-\epsilon}\big|^{p\theta_2} |z|^p N(\d r, \d z, \d s) \right)\\
\leq &C  \left[ \bE \sup_{s\in [t-\epsilon, t]} |Y_{s-}-Y_{t-\epsilon}|^{\a\theta_2} \right]^{\frac{p}{\a}} \left[\bE \left(\int_{t-\epsilon}^t \int_{|z|\leq \epsilon^{\frac{1}{\a}}} \int_0^{\lambda_2}  |z|^2N(\d r, \d z, \d s) \right)^{\frac{\a p}{2(\a-p)}}\right]^{1-\frac{p}{\a}} \\
&+ C \int_{t-\epsilon}^t \int_{|z|> \epsilon^{\frac{1}{\a}}} \int_0^{\lambda_2}\bE |Y_{s-}-Y_{t-\epsilon}|^{p\theta_2} |z|^p \d r\nu(\d z) \d u \overset{\eqref{Eq-Yst}}{\leq}C\epsilon^{\frac{p}{\a}(1+\theta_2)}. 
\end{align*}
Similarly, we have
\begin{align*}
\bE [|J_{t,\epsilon}^2|^p] \leq& C \bE \left [\left|\int_{t-\epsilon}^t \int_{\R^d} \int_{k(Y_{t-\epsilon},z)}^{k(Y_{s-},z)}  |z|^2 N(\d r, \d z, \d s)\right|^{\frac{p}{2}} \right]\\
\leq & C  \left| \bE \int_{t-\epsilon}^t \int_{|z|\leq \epsilon^{\frac{1}{\a}(1+\frac{\theta_3}{\a})}} \int_{k(Y_{t-\epsilon},z)}^{k(Y_{s-},z)}  |z|^2 N(\d r, \d z, \d s)\right|^{\frac{p}{2}} \\
&+ C  \bE \int_{t-\epsilon}^t \int_{|z|>\epsilon^{\frac{1}{\a}(1+\frac{\theta_3}{\a})}} \int_{k(Y_{t-\epsilon},z)}^{k(Y_{s-},z)}  |z|^p N(\d r, \d z, \d s) \\
\leq & C \left[ \int_{t-\epsilon}^t \int_{|z|\leq \epsilon^{\frac{1}{\a}(1+\frac{\theta_3}{\a})}}  \bE  |k(Y_{s-},z)-k(Y_{t-\epsilon},z)| |z|^2 \nu(\d z) \d s\right]^{\frac{p}{2}}\\
&+ C \int_{t-\epsilon}^t  \int_{|z|> \epsilon^{\frac{1}{\a}(1+\frac{\theta_3}{\a})}}  \bE  |k(Y_{s-},z)-k(Y_{t-\epsilon},z)| |z|^p \nu(\d z) \d s\\
\leq & C \left[\int_{t-\epsilon}^t\int_{|z|\leq \epsilon^{\frac{1}{\a}(1+\frac{\theta_3}{\a})}}\bE |Y_{s-}-Y_{t-\epsilon}|^{\theta_3}|z|^2 \nu(\d z) \d s\right]^{\frac{p}{2}}\\
&+C \int_{t-\epsilon}^t\int_{|z|> \epsilon^{\frac{1}{\a}(1+\frac{\theta_3}{\a})}}\bE |Y_{s-}-Y_{t-\epsilon}|^{\theta_3}|z|^p \nu(\d z) \d s
\overset{\eqref{Eq-Yst}}{\leq}C\epsilon^{\frac{p}{\a}(1+\frac{\theta_3}{\a})}.
\end{align*}
Combing the above inequalities, we get
\be\label{eq-J}
\bE [|J_{t,\epsilon}|^p]\leq C \epsilon^{\frac{p}{\a}(1+\theta_2\wedge \frac{\theta_3}{\a})}.
\ee
Thus we get \eqref{Eq-dif-Y} for $\a\in (1,2)$.
\medskip

For $\alpha\in(0,1)$, let $\delta=\epsilon^{1/(1-\theta_1)}$, $s\in [t-\epsilon, t]$,   $s_\delta=t-\epsilon+\delta\left \lfloor (s-(t-\epsilon))/\delta\right\rfloor$, here $\lfloor a \rfloor$ is the max integer less than  or equal to $a$. Consider the solution to
   \begin{align*}
   V_u^\epsilon=Y_{t-\epsilon}+\int_{t-\epsilon}^ub(V_{s_\delta}^\epsilon)\d s, \quad u\in[t-\epsilon,t].
   \end{align*}
One can see that $V_t^\epsilon$ is well defined and $\mathcal{F}_{t-\epsilon}$ measurable. Writing
  \begin{align*}
  V_u^\epsilon=Y_{t-\epsilon}+\int_{t-\epsilon}^ub(V_s^\epsilon)\d s+\int_{t-\epsilon}^u(b(V_{s_\delta}^\epsilon)-b(V_s^\epsilon))\d s.
  \end{align*}
  Then for $u\in[t-\epsilon,t]$,
  \begin{align*}
  |Y_u-V_u^\epsilon|\leq & \int_{t-\epsilon}^u|b(Y_s)-b(V_{s}^\epsilon)|\d s+\int_{t-\epsilon}^u|b(V_{s_\delta}^\epsilon)-b(V_s^\epsilon)|\d s\\
&+\int_{t-\epsilon}^t\int_{\mathbb{R}^d}\int_0^{k(Y_{s-},z)} g(Y_{s-},z) N(\d r, \d z, \d s)\\
\leq & c_1\epsilon \sup_{s\in [t-\epsilon, u]} |Y_s-V_{s}^\epsilon|^{\theta_1}+c_1 \epsilon \sup_{s\in [t-\epsilon,u]}|V_{s_\delta}^\epsilon-V_s^\epsilon|^{\theta_1} +c_2 \int_{t-\epsilon}^t\int_{\R^d} \int_0^{\lambda_2} |z| N(\d r, \d z, \d s).
  \end{align*}
  We can get
  $$
  \bE [R_{t, \epsilon}|^p] := \bE \left[ \left| \int_{t-\epsilon}^t\int_{\R^d} \int_0^{\lambda_2} |z| N(\d r, \d z, \d s)\right|^p \right] \leq C\epsilon^{p/\alpha}
  $$
  with the similar argument proving \eqref{Eq-st1}. Setting $S_{t,\epsilon}=\sup_{s\in[t-\epsilon,t]}|Y_s-V_s^\epsilon|$ and using that $b\in C^{\theta_1}(\mathbb{R}^d)$ and that $|V_s^\epsilon-V_{s_\delta}^\epsilon|\leq C\delta$, we see that
  \begin{align*}
  S_{t,\epsilon}\leq C (\epsilon S_{t,\epsilon}^{\theta_1}+\epsilon\delta^{\theta_1} +R_{t,\epsilon})=C (\epsilon S_{t,\epsilon}^{\theta_1}+\epsilon^{\frac{1}{1-\theta_1}} + R_{t,\epsilon}).
  \end{align*}
  Choosing $\epsilon$ sufficient small and using the Young inequality, we have $S_{t,\epsilon}\leq C\epsilon^{\frac{1}{1-\theta_1}}+\tfrac{\theta_1}{2}S_{t,\epsilon}+CR_{t,\epsilon}$. Thus,
  \begin{align}\label{S}
  S_{t,\epsilon}\leq CR_{t,\epsilon}+C\epsilon^{\frac{1}{1-\theta_1}}.
  \end{align}
  We finally recall that $Y_t^\epsilon=V_t^\epsilon+\int_{t-\epsilon}^t \int_{\mathbb{R}^d}\int_0^{k(Y_{t-\epsilon},z)}  g(Y_{t-\epsilon},z) N(\d r,\d z, \d s )=Y_{t-\epsilon}+\int_{t-\epsilon}^t b (V_s^\epsilon)\d s+\int_{t-\epsilon}^t (b (V_{s_\delta}^\epsilon)-b (V_s^\epsilon))\d s+\int_{t-\epsilon}^t \int_{\mathbb{R}^d}\int_0^{k(Y_{t-\epsilon},z)}  g(Y_{t-\epsilon},z) N(\d r,\d z, \d s )$
so that
\begin{align*}
|Y_t-Y_t^\epsilon|\leq &\int_{t-\epsilon}^t|b (Y_s)-b (V_{s}^\epsilon)|\d s\\&+\Big|\int_{t-\epsilon}^t\int_{\mathbb{R}^d}\int_0^\infty|\1_{[0,k(Y_{s-},z)]}(r)g(Y_{s-},z)-\1_{[0,k(Y_{t-\epsilon},z)]}(r)g(Y_{t-\epsilon},z)| N(\d r, \d z, \d s)\Big|\\&+\int_{t-\epsilon}^t |b (V_{s_\delta}^\epsilon)-b (V_s^\epsilon)|\d s=:I_{t, \epsilon}+J_{t, \epsilon}+K_{t, \epsilon}.
\end{align*}
First, by \eqref{S}
$$
I_{t, \epsilon}\leq C\int_{t-\epsilon}^t|Y_s-V_s^\epsilon|^{\theta_1}\d s\leq C(\epsilon R_{t,\epsilon}^{\theta_1}+\epsilon^{\frac{1}{1-\theta_1}}),
$$
thanks to the fact $\bE|R_{t,\epsilon}|^p\leq C\epsilon^{p/\alpha}$,
$$\bE[|I_{t, \epsilon}|^p]\leq C [\epsilon^{\frac{p}{1-\theta_1}}+\epsilon^p\bE(R_{t,\epsilon}^{p\theta_1})]\leq C [\epsilon^{\frac{p}{1-\theta_1}}+\epsilon^{p(1+\frac{\theta_1}{\alpha})}] .
$$
Next for $J_{t, \epsilon}$, by the same way of dealing with \eqref{J}, we have
\begin{align*}
\bE [|J_{t,\epsilon}|^p] 
\leq &C\bE \left[ \left|  \int_{t-\epsilon}^t \int_{|z|\leq \epsilon^{\frac{1}{\a}}} \int_0^{\lambda_2}\big| Y_{s-}-Y_{t-\epsilon}\big|^{2\theta_2} |z|^2N(\d r, \d z, \d s)\right|^{\frac{p}{2}}\right] \\
&+ C \bE \left( \int_{t-\epsilon}^t \int_{|z|>\epsilon^{\frac{1}{\a}}} \int_0^{\lambda_2}\big| Y_{s-}-Y_{t-\epsilon}\big|^{p\theta_2} |z|^p N(\d r, \d z, \d s) \right)
 \\
&+ C  \left| \bE \int_{t-\epsilon}^t \int_{|z|\leq \epsilon^{\frac{1}{\a}(1+\theta_3)}} \int_{k(Y_{t-\epsilon},z)}^{k(Y_{s-},z)}  |z|^2 N(\d r, \d z, \d s)\right|^{\frac{p}{2}} \\
&+ C  \bE \left( \int_{t-\epsilon}^t \int_{|z|>\epsilon^{\frac{1}{\a}(1+\theta_3)}} \int_{k(Y_{t-\epsilon},z)}^{k(Y_{s-},z)}  |z|^p N(\d r, \d z, \d s) \right)\\
\leq & C \bE \left[ \sup_{s\in [t-\epsilon, t]} |Y_{s-}-Y_{t-\epsilon}|^{p\theta_2} \left(\int_{t-\epsilon}^t \int_{|z|\leq \epsilon^{\frac{1}{\a}}} \int_0^{\lambda_2}  |z|^2N(\d r, \d z, \d s) \right)^{\frac{p}{2}} \right] \\&+  C \left( \int_{t-\epsilon}^t \int_{|z|>\epsilon^{\frac{1}{\a}}} \bE \big| Y_{s-}-Y_{t-\epsilon}\big|^{p\theta_2} |z|^p \nu( \d z) \d s \right)\\
&+C  \left|  \int_{t-\epsilon}^t \int_{|z|\leq\epsilon^{\frac{1+\theta_3}{\a}}} (\bE\left| Y_{s-}-Y_{t-\epsilon} \right|^{\theta_3 }\wedge 1) |z|^2 \nu( \d z) \d s\right|^{\frac{p}{2}}\\
&+ C\left( \int_{t-\epsilon}^t \int_{|z|>\epsilon^{\frac{1+\theta_3}{\a}}}  \bE(\left|Y_{s-}-Y_{t-\epsilon}\right|^{\theta_3}\wedge 1) |z|^p \nu( \d z) \d s \right)
\\
\overset{\eqref{Eq-Yst},\eqref{Eq-Yst11}}\leq&  C \epsilon^{{\frac{p}{\a}}(1+\theta_2)}+C \epsilon^{\frac{p}{\a}(1+\theta_3)}.
\end{align*}
Finally, since $b \in C^{\theta_1}(\mathbb{R}^d)$ and since $|V_s^\epsilon-V_{s_\delta}^\epsilon|\leq C\delta$, we have $K_{t,\epsilon}\leq C\epsilon\delta^{\theta_1}=C\epsilon^{\frac{1}{1-\theta_1}}$ a.s., whence $\bE[|K_{t,\epsilon}|^p]\leq C \epsilon^{\frac{p}{1-\theta_1}}$. Thus, we get \eqref{Eq-dif-Y} for $\a\in (0,1)$. The proof for $\a=1$ is similar, so we omit it here.
   \end{proof}

Now we are going to prove the regularity of the density of the process $Y_t$ defined as in \eqref{Eq-SDEy}. We first give the following lemma about the regularity of L\'evy processes.

\bl\label{Le-Density-Z}
Suppose $Z_t$ is a L\'evy process with L\'evy measure $\nu$, $\nu$ satisfies {\bf Assumption} \ref{Aspt3}. Let $p^Z_t$ denote the density of $Z_t$, then for any $s\geq 0$, $q\in [1,\infty]$ and $t\in(0,1)$,
\be\label{Eq-Density-Z}
\|p^Z_t\|_{B^s_{q,\infty}}\leq C t^{-(s+d/q')/\a},
\ee
where $C=C(s,d,\a)$, $\frac{1}{q'}=1-\frac{1}{q}$. 
\el
\bpf
Notice that 
$$
\|f\|_{B^s_{q,\infty}} = \sup_{j\geq -1} 2^{js}\|\Delta_j f\|_{q}\leq \left\|(\sum_{j\geq-1} |\Delta_j f|^2)^{1/2} \right\|_q \asymp  \|f\|_{H^s_q}, 
$$
where $H^s_q$ is the Bessel potential space. By interpolation theorem, we only need to prove
\begin{align*}
\sup_{|\a|=k}\|\p^\a p_t\|_{q}\leq C(k,d,\a)t^{-(k+d/q')/\a}, \quad k\in \N,
\end{align*}
and the above inequality is a simple consequence of \cite[Proposition 2.3]{Sc-Sz-Wa} and \cite[Lemma 1.3 and Lemma 3.3]{De-Fo}. So we complete our proof. 
\epf

\bl\label{Le-Density-Y}
Suppose $a, g$ satisfy \eqref{Eq-codt-agk},  $\theta_1>1-\a$ if $\a\in(0,1)$ and $|g(y,z)|\geq c_2'|z|$ for some $c_2'>0$, $k$ satisfies {\bf Assumption} \ref{Aspt1}  with  $\Lambda_i$ and $\vartheta$ replaced by $\lambda_i$ and $\theta_3$, respectively.
Then $Y_t$ has a density $p_t^Y$ and $p_t^Y  \in B^\gamma_{q,\infty}$ with $\gamma, q$ satisfying
\begin{align}\label{Eq-gamma-g}
0<\gamma < (1\wedge \a)(\a \theta_0-1), \quad 1\leq q<\frac{d}{d+\gamma-(1\wedge \a)(\a\theta_0-1)},
\end{align}
 where $\theta_0$ is the same number in Lemma \ref{Le-densityY}.
\el
\bpf
Recalling that $\cC_R=R\cdot \cC$, for $\gamma>0$ and $q\in[1,\infty]$ define
\ce
\cS^{-\gamma}_{q, j}:= \left\{\varphi\in \sS(\R^d): \hat{\varphi}\in \cC_{2^j}, \|\varphi\|_{q}\leq 2^{\gamma j} \right\}.
\de
Choose $\varphi\in \cS^{-\gamma}_{q',j}$, take the constructed process $V^\epsilon_t, Y_{t}^\epsilon$  from Lemma \ref{Le-densityY},
$$Y_{t}^\epsilon=V^\epsilon_t+\int_{t-\epsilon}^t\int_{\R^d} \int_0^\infty \1_{[0,k(Y_{t-\epsilon},z)]}(r)g(Y_{t-\epsilon},z) N^{(\a)}(\d r, \d z, \d s).$$
 By trangale inequality,
\begin{align*}
|\bE \varphi(Y_t)| \leq |\bE \varphi(Y_t^\epsilon) |+ |\bE \varphi(Y_t) -\bE \varphi(Y_t^\epsilon) | =: I^\epsilon_1(\varphi)+I^\epsilon_2(\varphi).
\end{align*}
Define
$$
Z_t^y:= \iint_{\R^d}\int_0^\infty \1_{[0,k(y,z)]}(r)g(y,z) N^{(\a)}(\d r, \d z, \d s),\quad y\in\mathbb{R}^d.
$$
Then $Z^y_t$ is a L\'evy process with L\'evy measure $\nu_y=\nu\circ [k(y,\cdot)g(y, \cdot)]^{-1}$. Under our assumptions, one can easily check that $\nu_y$ satisfies  {\bf Assumpiton} \ref{Aspt3}.   For $I_1^\epsilon(\varphi)$, recall that $V_t^\epsilon\in \sF_{t-\epsilon}$, we get
\be\label{Eq-I1}
\begin{aligned}
I_1^\epsilon(\varphi) =& |\bE \, [(\bE\  \varphi(Y^\epsilon_t)|\sF_{t-\epsilon})]|\\
=&\left|\bE \left[\bE\left( \varphi\left(V^\epsilon_t+\int_{t-\epsilon}^t\int_{\R^d} \int_0^\infty \1_{[0,k(Y_{t-\epsilon},z)]}(r)g(Y_{t-\epsilon},z) N^{(\a)}(\d r, \d z, \d s) \right)\Big|\sF_{t-\epsilon}\right)\right]\right|\nonumber\\
=&  \left|\bE \left[\bE\left( \varphi\left(u+\int_{t-\epsilon}^t\int_{\R^d} \int_0^\infty \1_{[0,k(y,z)]}(r)g(y,z) N^{(\a)}(\d r, \d z, \d s)\right)\Big|_{u=V_t^\epsilon, y=Y_{t-\epsilon}} \right)\right]\right|.
\end{aligned}
\ee
Define $\tau_u\varphi(\cdot):=\varphi(\cdot+u)$ for $u\in\mathbb{R}^d$. By Lemma \ref{Le-Density-Z} and Bernstein's inequality, for $q'=\frac{q}{q-1}$ and $s>\gamma$
\begin{align}\label{Eq-I1}
\begin{aligned}
I_1^\epsilon(\varphi)\leq &\sup_{u\in \R^d} \bE\, \varphi\left(u+\int_{t-\epsilon}^t\int_{\R^d} \int_0^\infty \1_{[0,k(y,z)]}(r)g(y,z) N^{(\a)}(\d r, \d z, \d s)\right)\\
=& \sup_{u\in \R^d} \bE \tau_u \varphi(Z^y_\epsilon) \leq C \|\varphi\|_{B^{-s}_{q',1}} \|p_\epsilon^{Z^y}\|_{B^s_{q, \infty}}\\
\leq & C 2^{(\gamma-s)j} \epsilon^{-\frac{1}{\a}(s+\frac{d}{q'})}.
\end{aligned}
\end{align}

Choose $p\in (0, 1\wedge \a)$, by Bernstein's inequality and Lemma \ref{Le-densityY},
\be\label{Eq-I2}
\begin{aligned}
I_2^\epsilon(\varphi) \leq& \|\varphi \|_{C^p} \bE |Y_t^\epsilon-Y_t|^p \leq C 2^{(p+\frac{d}{q'})j} \|\varphi\|_{q'} \ \epsilon^{\theta_0 p}\\
\leq& C 2^{(p+\gamma+\frac{d}{q'})j} \epsilon^{\theta_0 p}.
\end{aligned}
\ee
where $\theta_0$ keeps the same as in \eqref{Eq-dif-Y}. Notice that under our assumptions, $\a\theta_0>1$, for any
$$
p\in (0,1\wedge \a), \quad 0< \gamma < (\a \theta_0-1) p,
$$
we can choose $s, q, \epsilon$ such that
\be\label{Eq-s-eps}
q<\frac{d}{d+\gamma-(\a\theta_0-1)p}, \quad s=\frac{\a\theta_0p\gamma+d(p+\gamma+d/q')/q'}{\a\theta_0p-p-\gamma-d/q'},  \quad \epsilon = 2^{\frac{\a(\gamma-s)j}{s+d/q'}}.
\ee
Then combine \eqref{Eq-I1}, \eqref{Eq-I2} and \eqref{Eq-s-eps}, we get
\begin{align}\label{eq-bddE}
|\bE \varphi(Y_t)| \leq I^\epsilon _1(\varphi)+I^\epsilon _2(\varphi) \leq C,
\end{align}
where $C$ only depends on $d,\a,\theta_i, \lambda_i, c_i, \gamma, p, q$. When $\alpha\in[1,2)$, notice that $p$ can infinitely approach $1$, so we have
\begin{align*}
0<\gamma < (\a \theta_0-1), \quad 1\leq q<\frac{d}{d+1+\gamma-\a\theta_0}.
\end{align*}
When $\alpha\in(0,1)$, $p$ can infinitely approach $\a$, so
\begin{align*}
0<\gamma < \a(\a \theta_0-1), \quad 1\leq q<\frac{d}{d+\a+\gamma-\a^2\theta_0}.
\end{align*}
For any $\varphi\in B^{-\gamma}_{q', 1}$ and $j\geq -1$, define $\varphi_j= \frac{\Delta_j \varphi}{2^{\gamma j} \|\Delta_j \varphi\|_{q'}}$. Notice that $\varphi_j \in \cS^{-\gamma}_{q', j}$, by \eqref{eq-bddE}, we obtain
$$
|\bE \varphi(Y_t)| \leq \sum_{j\geq-1}|\bE \Delta_j \varphi(Y_t)| \leq \sum_{j\geq-1} |\bE \varphi_j(Y_t)| \cdot 2^{\gamma j} \|\Delta_j \varphi\|_{q'} \leq C \|\varphi\|_{B^{-\gamma}_{q',1}}.
$$
By duality, $p_t^Y\in B^{\gamma}_{q,\infty}$.
\epf

Now suppose $\kappa(x,z)$ satisfies \eqref{AS1}-\eqref{AS3} and $\max\{{0, (1-\a)}\}<\vartheta<1$, $\beta\in (1-\a, \vartheta)$ when $\alpha\in(0,1]$, $\beta\in (-(\frac{\a-1}{2}\wedge \vartheta) , 0]$ when $\alpha\in(1,2)$, and $b\in \sC^\beta$. By Theorem \ref{Th-Holder-Var}, we can fix $\lambda$ sufficient large such that $u\in\sC^{\a+\beta}$ is the unique solution   to the following resolvent equation in the distribution sense
$$
\lambda u-\sL_{\kappa,b}^\alpha u=b,
$$
and
$$
\|\nabla u\|_{L^\infty(\R^d)}\leq \frac{1}{2}.
$$
\medskip
 Define $\Phi(x)=:u(x)+x$, then $\Phi$ is a diffeomorphism.
\bp\label{Prop-Transfer}
Under the same conditions as in Corollary \ref{Cor-MP}, the process $Y_t:=\Phi(X_t)$ satisfies the following SDE
\begin{align*}
Y_t=Y_0+\int_0^t a(Y_s)\d s&+\int_0^t\int_{\mR^d}\int_0^\infty g(Y_{s-},z)\1_{[0,k(Y_{s-},z)]}(r) N^{(\a)}(\d r, \d z, \d s),
\end{align*}
where $X_t$ is the weak solution to \eqref{Xsde}, 
\begin{align}\label{eq-a-k}
a(y):=\lambda u(\Phi^{-1}(y)), \quad k(y,z):=\kappa(\Phi^{-1}(y),z)
\end{align}
and
\begin{align}\label{eq-g}
g(y,z)=\Phi(\Phi^{-1}(y)+z)-y=u(\Phi^{-1}(y)+z)+z-u(\Phi^{-1}(y)).
\end{align}
Furthermore, we have $a\in \sC^{\a+\beta}$,
\be\label{eq-g}
c_2^{-1}|z|\leq |g(y,z)|\leq c_2|z|,\quad |g(y_1, z)-g(y_2,z)|\leq c_3|y_1-y_2|^{\a+\beta-1} |z|
\ee
and $k$ satisfies \eqref{AS1}-\eqref{AS3} with the same $\vartheta$ as $\kappa$.
\ep

\begin{proof}
With the similar argument showed in \cite[Proposition 2.7]{At-Bu-My}, applying It\^o's formula to $\Phi(x)=u(x)+x$ with respect to the process $X_t$, we get the desired conclusion.  
\end{proof}

Now we are in the position of proving  Theorem \ref{Th-Density}.
\begin{proof}[\bf Proof of Theorem \ref{Th-Density}]
For $\a\in (0,1]$, letting $a=b$, $g(y,z)=z$, $k=\kappa$, we have $\theta_1=\beta$, $\theta_2$ can infinitely approach $1$, $\theta_3=\vartheta$. By Lemma \ref{Le-Density-Y}, we have $\a \theta_0= \a+\beta$ and 
$p^X_t \in B^{\gamma}_{q,\infty}$ with 
$$
0<\gamma < \a(\a +\beta -1), \quad 1\leq q<\frac{d}{d+\gamma-\a(\a+\beta-1)}. 
$$ 
For $\a\in (1,2)$, by  
Proposition \ref{Prop-Transfer}, $Y_t=\Phi(X_t)$ satisfies \eqref{Eq-SDEy} and in this case the index $\theta_1$ can be taken infinitely approach $1$, $\theta_1=\alpha+\beta-1$ and $\theta_3=\vartheta$. Therefore, by Lemma \ref{Le-Density-Y},  $p_t^Y\in B^\gamma_{q,\infty}$ with
$$
0<\gamma < (\a +\beta -1)\wedge \tfrac{\vartheta}{\a}, \quad 1\leq q<\frac{d}{d+\gamma-(\a+\beta-1)\wedge \frac{\vartheta}{\a}}.
$$
This implies that there also exists a density $p_t^X$ of the distribution of $X_t$ such that $p_t^X =p_t^Y\circ \Phi \cdot\det(\nabla\Phi)$ and $p_t^X\in B^\gamma_{q,\infty}$.  Since the martingale solution $\mathbb{P}$ corresponding to SDE \eqref{Xsde} can be denoted by $\mathbb{P}=\bP\circ X$,  we get the desired result.
\end{proof}

Last we point out that Corollary \ref{Cor-FPE} is a consequence of Corollary\ref{Cor-MP}, Theorem \ref{Th-Density} and  Proposition 4.9.19 of \cite{Et-Ku}.

\section*{Acknowledgement}
We would like to thank Professor Xicheng Zhang, Professor Moritz Kassmann and Dr. Longjie Xie for useful conversations.

\end{document}